\documentclass[11pt]{article}
\usepackage[a4paper,margin=27mm]{geometry}
\usepackage[T1]{fontenc}
\usepackage{lmodern}
\usepackage{amsmath,amssymb,amsthm,microtype,fancyhdr}
\usepackage[colorlinks=true,linkcolor=blue,citecolor=blue,urlcolor=blue,
 pdftitle={A cyclic non-Ramsey heptagon},
 pdfauthor={D\"om\"ot\"or P\'alv\"olgyi}]{hyperref}
\newtheorem{theorem}{Theorem}
\newtheorem{lemma}[theorem]{Lemma}
\newcommand{\R}{\mathbb R}
\newcommand{\Q}{\mathbb Q}

\fancypagestyle{uncheckedappendix}{%
  \fancyhf{}
  \fancyhead[R]{\small Appendix by ChatGPT --- unchecked}
  \fancyfoot[C]{\thepage}
  
}
\begin{document}
\begin{center}
{\Large A cyclic non-Ramsey heptagon}\par
\medskip
D\"om\"ot\"or P\'alv\"olgyi\footnote{ELTE E\"otv\"os Lor\'and
University and Alfr\'ed R\'enyi Institute of Mathematics, Budapest,
Hungary.}
\end{center}
\vspace{3mm}

{\small\emph{AI disclosure.} This manuscript was written entirely by
	ChatGPT. I (D\"om\"ot\"or P\'alv\"olgyi) contributed nothing to the proofs or ideas, I only gave suggestions about the presentation.
	I also wrote some remarks about the proof, which can be found right before the Appendix, which contains further results by ChatGPT that I have not verified.\par}

\begin{abstract}
We present a construction with seven points on a circle that is not Ramsey, disproving a conjecture of Erd\H{o}s, Graham, Montgomery, Rothschild, Spencer and Straus.
\end{abstract}

A finite Euclidean set $P$ is \emph{Ramsey} if, for every positive integer
$k$, some $\R^n$ contains a monochromatic congruent copy of $P$ in every
$k$-colouring. The following disproves the conjecture of Erd\H{o}s,
Graham, Montgomery, Rothschild, Spencer and Straus \cite{EGMRSS}
that every finite spherical set is Ramsey.

\begin{theorem}\label{thm:main}
For any transcendental $r>2$, the seven-point set
\[
 P=\{(0,0)\}\;\cup\;
   \left\{\left(j,\pm\sqrt{j(2r-j)}\right):j\in\{1,3,4\}\right\}
\]
lies on the circle $(x-r)^2+y^2=r^2$ and is not Ramsey.
In particular, one may take $r=\pi$.
\end{theorem}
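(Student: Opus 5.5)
The plan is to colour $\R^n$ by a colouring of the real line applied to $|x|^2$, in the spirit of the Erd\H{o}s et al.\ argument that non-spherical sets are not Ramsey, and to exploit the finer structure of $P$ that a bare spherical relation does not capture. Label the points of $P$ as $p_0=(0,0)$ and $p_j^{\pm}=(j,\pm s_j)$ with $s_j=\sqrt{j(2r-j)}$. Note that $|p_j^\pm|^2=2rj$.

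Take a congruent copy $y_i=t+Rp_i$ in $\R^n$ and set $a_i=|y_i|^2$. Put $w=\langle t,Re_1\rangle$ and $v=\langle t,Re_2\rangle$. Then
\[
a_j^{\pm}-a_0=2jw\pm 2s_jv+2rj ,
\]
so every monochromatic copy yields reals $a_0,a_j^\pm$ together with parameters $w,v$ satisfying this explicit system. Eliminating $w$ and $v$ leaves two families of relations with coefficients in $\Q(r,s_1,s_3,s_4)$:
\begin{align*}
\tfrac13\bigl(a_3^++a_3^--2a_0\bigr)&=a_1^++a_1^--2a_0=\tfrac14\bigl(a_4^++a_4^--2a_0\bigr),\\
\frac{a_1^+-a_1^-}{s_1}&=\frac{a_3^+-a_3^-}{s_3}=\frac{a_4^+-a_4^-}{s_4}.
\end{align*}
The first family has rational coefficients summing to zero. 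By Rado's criterion it is partition regular, so on its own it cannot be blocked. The irrational second family, and the way the constant $2rj$ enters, must be used.

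The key step is therefore a colouring of $\R$ with finitely many colours admitting no solution of the full system in which the seven values arise from a genuine copy. I would first use transcendence of $r$ to show that $1,r,s_1,s_3,s_4$ satisfy no unexpected algebraic (in particular no $\Q$-linear) relations. Concretely, $s_j^2=j(2r-j)$ makes $\Q(r,s_1,s_3,s_4)$ a degree-$8$ extension of $\Q(r)$. I would then choose a $\Q$-linear map (via a Hamel basis adapted to this field) that projects the relations onto a coordinate where they become a system with rational coefficients. The intended target is a system violating Rado's columns condition, or one forcing a nonzero constant term, such as a nonzero multiple of the coefficient of $r$. A Straus-type colouring by fractional parts of that coordinate, with sufficiently many colours, then excludes monochromatic solutions.

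I expect the main obstacle to be this step: finding the right $\Q$-linear functional, or more generally the right additive colouring of $\R$. It must simultaneously kill the free parameters $w,v$ (which range over all reals) and remove the trivial degenerate solutions that make the rational relations partition regular. I would first try to combine the two displayed families, using that $\sqrt{j(2r-j)}$ for $j=1,3,4$ are independent over $\Q(r)$. The goal is to extract a single relation $\sum c_i\,\phi(a_i)=\phi(\beta)$ with integer $c_i$ and $\phi(\beta)\neq0$ forced by the $2rj$ terms. This is precisely where transcendence of $r$ should guarantee $\phi(\beta)\neq0$, and a finite colouring by $\lfloor \phi(x)/\varepsilon\rfloor \bmod k$ would finish the proof.
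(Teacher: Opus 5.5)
There is a genuine gap, and it breaks the whole strategy, not just one step. Every colouring you propose factors through $|x|^2$: a colouring of $\R$, possibly after a $\Q$-linear map $\phi$ or a fractional-part colouring. Any such colouring is constant on each sphere centred at the origin, and $P$ is spherical. Concretely, take $t=-R(r,0)^{\mathsf T}$, i.e.\ $w=-r$ and $v=0$ in your notation. Then $y_i=R\bigl(p_i-(r,0)^{\mathsf T}\bigr)$ and $a_i=r^2$ for all seven points, so this copy is monochromatic whatever colouring of $\R$ you choose. Your own elimination shows the same thing. The constant $2rj$ only appears in the combination $2j(w+r)$, so after eliminating $w,v$ both displayed families are homogeneous, and every constant vector solves them. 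There is no inhomogeneous term $\beta$ for transcendence of $r$ to make nonzero, and no choice of $\phi$ or additive colouring can fix this. This is exactly why the Erd\H{o}s et al.\ argument stops at non-spherical sets.

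The missing idea is to replace $|x|^2$ by a function that is not isometry-invariant pointwise, but whose fixed weighted sum over every congruent copy is a nonzero constant. The paper builds this as follows:
\begin{itemize}
\item Transcendence of $r$ is used to get a derivation $D$ of $\R$ with $D(r)=1$, applied coordinatewise, and one sets $F(q)=(2r-1)(2r-3)(2r-4)\|D(q)\|^2$.
\item The weights are $2$ at the origin and $-2,2,-1$ at each point with first coordinate $1,3,4$, coming from the multisets $\{0,3,3\}$ and $\{1,1,4\}$.
\item With these weights, the zeroth, first and second moments vanish, as do $\sum_p\lambda_pD(p)$ and $\sum_p\lambda_pD(p)p^{\mathsf T}$.
\item Expanding $D(t+Rp)=D(t)+D(R)p+RD(p)$ then gives $\sum_p\lambda_pF(p')=-24$ for every copy, since $\sum_p\lambda_p\|D(p)\|^2=-24/\bigl((2r-1)(2r-3)(2r-4)\bigr)$.
\end{itemize}
Only at this point does the inhomogeneous Rado theorem (EGMRSS Lemma~15) enter. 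It gives a finite colouring of $\R$ with no monochromatic solution of $\sum_p\lambda_pz_p=-24$, and pulling it back through $F$ avoids every congruent copy of $P$.
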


\begin{proof}
The circle equation is immediate, and $P$ cannot embed in $\R^1$.
We follow the strategy of EGMRSS \cite[proof of Theorem~13]{EGMRSS},
who excluded nonspherical sets using a fixed nonzero weighted sum of
squared norms. Here we seek fixed integer weights $\lambda_p$ with
$\sum_{p\in P}\lambda_p=0$ and, for each $n\ge2$, a function $F:\R^n\to\R$ such that,
for every congruent copy $P'=\{p':p\in P\}$ in $\R^n$,
\begin{equation}\label{eq:target}
                         \sum_{p\in P}\lambda_p F(p')=-24.
\end{equation}
The scalar equation $\sum_p\lambda_p z_p=-24$ has no constant solution.
By the inhomogeneous form of Rado's theorem
\cite[Satz XIII$'$, p.~470]{Rado1933}, with the
needed real-variable statement given in \cite[Lemma~15]{EGMRSS}, there
is a finite colouring $\chi$ of $\R$ with no monochromatic solution.
The colouring $\chi\circ F$ then avoids every congruent copy of $P$.
The same $\chi$ works in all dimensions. Thus it remains only to
construct these weights and the function $F$, which we now do.

A \emph{derivation} is an additive function $D:\R\to\R$ satisfying
$D(ab)=aD(b)+bD(a)$.
Since $D(1)=D(1\cdot1)=2D(1)$, we have $D(1)=0$, and hence $D(j)=0$
for every integer $j$.
To obtain $D(r)=1$, include $r$ in a transcendence basis of $\R/\Q$,
differentiate formally with respect to $r$ on the resulting rational
function field, and extend uniquely to its algebraic extension $\R$.
We apply $D$ entrywise to vectors and matrices.
For $p=(j,y)\in P\setminus\{(0,0)\}$, differentiating
$y^2=j(2r-j)$ gives
\[
 2yD(y)=D(y^2)=D\bigl(j(2r-j)\bigr)=2j,
 \qquad
 D(p)=\left(0,\frac{j}{y}\right).
\]

Assign weight $2$ to the origin and weights $-2,2,-1$ to each point
with first coordinate $1,3,4$, respectively. These choices come from
the multisets $\{0,3,3\}$ and $\{1,1,4\}$, which have equal cardinalities,
sums and sums of squares. Using the symmetry in $y$, we obtain
\begin{equation}\label{eq:moments}
\begin{gathered}
 \sum_p\lambda_p=0,\qquad
 \sum_p\lambda_p p=0,\qquad
 \sum_p\lambda_p pp^{\mathsf T}=0,\\
 \sum_p\lambda_p D(p)=0,\qquad
 \sum_p\lambda_p D(p)p^{\mathsf T}=0.
\end{gathered}
\end{equation}
All sums here and below are over $p\in P$.
On the other hand, setting $R=(2r-1)(2r-3)(2r-4)$ gives
\begin{equation}\label{eq:energy}
 \sum_p\lambda_p\|D(p)\|^2
   =-\frac4{2r-1}+\frac{12}{2r-3}-\frac8{2r-4}
   =-\frac{24}{R}.
\end{equation}

For $n\ge2$ and $q=(q_1,\ldots,q_n)\in\R^n$, define
\[
                         F(q)=R\sum_{i=1}^n D(q_i)^2
                               =R\|D(q)\|^2.
\]
Every congruent copy $P'$ of $P$ in $\R^n$ has the form
$p'=t+Ap$, where $A^{\mathsf T}A=I_2$. The product rule gives
\[
                         D(p')=D(t)+D(A)p+AD(p).
\]
Squaring the norm and summing with weights $\lambda_p$, grouped
according to the terms in this expression, gives
\[
\begin{aligned}
 \sum_p\lambda_p\|D(p')\|^2
 ={}& \|D(t)\|^2\sum_p\lambda_p\\
 &+2\left\langle D(t),
       D(A)\sum_p\lambda_p p+A\sum_p\lambda_pD(p)\right\rangle\\
 &+\sum_p\lambda_p\|D(A)p\|^2
   +2\sum_p\lambda_p\langle D(A)p,AD(p)\rangle\\
 &+\sum_p\lambda_p\|AD(p)\|^2.
\end{aligned}
\]
By \eqref{eq:moments}, all terms except the last vanish: the translation
terms use the zeroth and first moments, and the two terms involving
$D(A)p$ use the quadratic and mixed moments. Since $A$ is an isometric
embedding, \eqref{eq:energy} therefore yields
\[
 \sum_p\lambda_p F(p')
   =R\sum_p\lambda_p\|AD(p)\|^2
   =R\sum_p\lambda_p\|D(p)\|^2
   =-24,
\]
as required in \eqref{eq:target}.
\end{proof}

\clearpage

\paragraph{Author's note.}
I would like to make some closing remarks.
First, disproving the conjecture made by Erd\H{o}s, Graham, Montgomery, Rothschild, Spencer and
Straus raises the question of whether the ``rival'' conjecture, made by Leader, Russell, and Walters \cite{LeaderRussellWalters2012} might be true, according to which a set is Ramsey if and only if it is subtransitive, i.e., it is the subset of a finite set on which an isometry group acts in a transitive way.
I asked ChatGPT to prove this conjecture, but instead it showed (see Theorem~\ref{app:thm:dimbarrier} in the Appendix) that its method of finding what it calls fixed weighted derivation-energy identities cannot establish non-Ramseyness for spherical configurations having fewer than seven points, while we know that some cyclic quadrilaterals are not subtransitive \cite{LeaderRussellWalters2011}.
However, ChatGPT did find a proof that all transitive sets are Ramsey; in fact, it found that proof two weeks before this construction, when I asked it some follow-up questions regarding my latest paper.
I am still working on the exposition of that proof, which uses elementary group theory.

Finally, let me remark that the way the construction was found by ChatGPT was not surprising to me at all. In fact, I have been trying similar things myself (unsuccessfully, partly due to my limited computational capabilities, but also because I had never heard of derivations of fields) ever since I read a comment\footnote{ \url{https://mathoverflow.net/questions/300604/almost-monochromatic-point-sets\#comment748363_300604}, MathOverflow (20 May 2018).} by fedja\footnote{MathOverflow user fedja is semi-anonymous; see \url{https://meta.mathoverflow.net/questions/4351/how-to-cite-comment-by-unknown-user-disproving-erd\H os-conjecture?}} on MathOverflow, where he gave a field-automorphism based construction for a question about almost-monochromatic
configurations I asked.
As I later found out, Erd\H{o}s, Graham, Montgomery, Rothschild, Spencer and
Straus \cite{ErdosEtAl1975III} asked practically the same thing, so fedja's construction also disproved their Conjecture 4.

\paragraph{Acknowledgements.}
The author was supported by the NRDI EXCELLENCE--24 grant no.~151504,
Combinatorics and Geometry, and by the ERC Advanced Grant no.~101054936,
ERMiD.

\clearpage
\appendix
\pagestyle{uncheckedappendix}
\section*{Appendix}
\label{app:scope}
\setcounter{theorem}{0}
\renewcommand{\thetheorem}{A.\arabic{theorem}}
\renewcommand{\theHtheorem}{appendix.\arabic{theorem}}
\setcounter{equation}{0}
\renewcommand{\theequation}{A.\arabic{equation}}
\renewcommand{\theHequation}{appendix.\arabic{equation}}

\textbf{Further consequences and limitations of the derivation method}

\emph{The statements and proofs in this appendix were generated by
ChatGPT during research conversations with the author. They have not
been independently verified by the author and are included as unchecked
claims to support further investigation.}

We investigate both the scope and the limitations of the method in the
main proof. It applies to almost every seven-point subset of a circle
(Theorem~\ref{app:thm:generic}) and to additional families with algebraic
dependencies (Section~\ref{app:sec:seven}). There are also explicit examples
in every affine dimension at least two (Theorem~\ref{app:thm:sharp_dimension}),
and seven-point examples admitting an avoiding colouring with eight colours
(Theorem~\ref{app:thm:eight_colours}). Conversely, we obtain sharp point-count
restrictions for fixed weighted derivation-energy identities
(Theorems~\ref{app:thm:barrier} and~\ref{app:thm:dimbarrier}), an exact cubic
test for seven-point certificates (Theorem~\ref{app:thm:cubic_test}), and
obstructions that survive arbitrary quadratic finite iterates and many
nonlinear polynomial expressions in one derivation
(Theorems~\ref{app:thm:all-jets} and~\ref{app:thm:nonlinear-jet-barrier}).
A further obstruction applies to a broader class of colourings
(Theorem~\ref{app:thm:algebraic_translations}). These results do not produce
a non-Ramsey cyclic quadrilateral.

A derivation is an additive map $D:\R\to\R$ satisfying
$D(ab)=aD(b)+bD(a)$; it acts entrywise on vectors and matrices.
Throughout, $J(x,y)=(-y,x)$. The weights below may be arbitrary real
numbers: we never assume that $D$ annihilates them.

\section{The invariant and its linear algebra}

\begin{lemma}\label{app:lem:criterion}
Let $P=\{p_1,\ldots,p_m\}\subset\R^d$. Suppose real weights $\lambda_i$
and a derivation $D$ satisfy
\begin{equation}\label{app:eq:moments}
 \sum_i\lambda_i=0,\quad
 \sum_i\lambda_i p_i=0,\quad
 \sum_i\lambda_i p_i p_i^{\mathsf T}=0,\quad
 \sum_i\lambda_i D(p_i)=0,
\end{equation}
and the matrix $M=\sum_i\lambda_iD(p_i)p_i^{\mathsf T}$ is symmetric.
If $E=\sum_i\lambda_i\|D(p_i)\|^2\ne0$, then $P$ is not Ramsey.
\end{lemma}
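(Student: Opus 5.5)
The plan is to repeat the argument in the proof of Theorem~\ref{thm:main} with fixed real weights and a general target constant. For each $n$ (at least the affine dimension of $P$) I will show that $F(q)=\|D(q)\|^2$ on $\R^n$ satisfies
\[
\sum_i\lambda_iF(p_i')=E
\]
for every congruent copy $P'$ of $P$ in $\R^n$. Since $\sum_i\lambda_i=0$ and $E\ne0$, the equation $\sum_i\lambda_iz_i=E$ has no constant solution. The inhomogeneous Rado theorem in its real-variable form \cite[Lemma~15]{EGMRSS} should then give a finite colouring $\chi$ of $\R$ with no monochromatic solution. The colouring $\chi\circ F$ avoids every congruent copy of $P$, and the same $\chi$ works in all dimensions. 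The one point to check is that that lemma allows arbitrary real coefficients $\lambda_i$, rather than only the integer weights used in Theorem~\ref{thm:main}. This is the form stated in EGMRSS, so I expect no difficulty.

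For the identity, write a congruent copy as $p_i'=t+Ap_i$ with $A$ an $n\times d$ real matrix satisfying $A^{\mathsf T}A=I_d$. The product rule and additivity give
\[
D(p_i')=D(t)+D(A)p_i+AD(p_i).
\]
Expanding $\sum_i\lambda_i\|D(p_i')\|^2$ produces six groups of terms, handled as follows.
\begin{itemize}
\item The $\|D(t)\|^2$ term vanishes because $\sum_i\lambda_i=0$.
\item The cross terms with $D(t)$ vanish by the first moment $\sum_i\lambda_ip_i=0$ and by $\sum_i\lambda_iD(p_i)=0$.
\item The quadratic term equals $\operatorname{tr}\bigl(D(A)^{\mathsf T}D(A)\sum_i\lambda_ip_ip_i^{\mathsf T}\bigr)=0$.
\item The last term equals $\sum_i\lambda_i\|D(p_i)\|^2=E$, because $A$ is an isometric embedding.
\end{itemize}

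The one genuinely new point compared with the main proof is the mixed term. There we had $M=0$; here we only know that $M$ is symmetric. I will write the mixed term as
\[
\sum_i\lambda_iD(p_i)^{\mathsf T}A^{\mathsf T}D(A)p_i=\operatorname{tr}\bigl(A^{\mathsf T}D(A)\,M^{\mathsf T}\bigr).
\]
Applying $D$ entrywise to $A^{\mathsf T}A=I_d$, and using $D(1)=D(0)=0$, gives $D(A)^{\mathsf T}A+A^{\mathsf T}D(A)=0$. So $S=A^{\mathsf T}D(A)$ is skew-symmetric. The trace of a skew-symmetric matrix times a symmetric matrix is zero, so this term vanishes as well.

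I expect this skew-symmetry step to be the main (though modest) obstacle. Checking that the derivation commutes correctly with transposes and products of matrices is routine entrywise, since $D$ is additive and satisfies the Leibniz rule on each product $a_{ki}a_{kj}$. Once it is in place, the total is exactly $E$, and the colouring argument above finishes the proof.
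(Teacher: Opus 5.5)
Your proposal is correct and follows essentially the same route as the paper's proof: the same expansion of $D(t+Ap_i)$, the same cancellations from the moment conditions, the same observation that $A^{\mathsf T}D(A)$ is skew-symmetric so its trace against the symmetric $M$ vanishes, and the same real-coefficient colouring lemma (Lemma~15 of EGMRSS) pulled back through $q\mapsto\|D(q)\|^2$. One small slip: writing $p_i'=t+Ap_i$ with $A^{\mathsf T}A=I_d$ requires $n\ge d$, not just $n$ at least the affine dimension, so for $n<d$ you should pad with zero coordinates as the paper does (or restrict the colouring of $\R^d$), which is harmless since $\|D(q,0)\|^2=\|D(q)\|^2$.
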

\begin{proof}
For $n\ge d$, any congruent copy in $\R^n$ has the form
$p_i'=t+Ap_i$, where $A^{\mathsf T}A=I_d$. Expand $D(p_i')=D(t)+D(A)p_i+AD(p_i)$. Conditions
\eqref{app:eq:moments} cancel all terms except the original energy and
the mixed term
\[
 2\sum_i\lambda_i\langle D(A)p_i,AD(p_i)\rangle
       =2\operatorname{tr}\bigl(D(A)^{\mathsf T}AM\bigr)=0.
\]
The last equality holds because
$D(A)^{\mathsf T}A+A^{\mathsf T}D(A)=0$, so the first factor is
skew-symmetric and $M$ is symmetric. Thus
$\sum_i\lambda_i\|D(p_i')\|^2=E$. The same identity holds in
lower dimensions by adding zero coordinates.

The equation $\sum_i\lambda_i z_i=E$ has no constant solution.
The real-coefficient inhomogeneous colouring theorem
\cite[Lemma~15]{EGMRSS} gives a finite colouring $\chi$ of $\R$
avoiding monochromatic solutions. Pulling it back through
$q\mapsto\|D(q)\|^2$ proves the claim.
\end{proof}

For at least three distinct points $p_i=(x_i,y_i)$ on the unit circle,
define a symmetric
bilinear form on $\R^m$ and a three-dimensional subspace by
\[
 B(u,v)=\sum_i\lambda_i u_i v_i,\qquad
 W=\operatorname{span}\{\mathbf1,x,y\}.
\]
The first three conditions in \eqref{app:eq:moments} say exactly that
$B$ vanishes on $W\times W$, or $W\subseteq W^\perp$.
Differentiating $\|p_i\|^2=1$ gives
\[
 D(p_i)=b_iJp_i
\]
for some $b_i\in\R$. The remaining conditions of Lemma~\ref{app:lem:criterion}
are exactly
\begin{equation}\label{app:eq:angular}
 B(b,\mathbf1)=B(b,x)=B(b,y)=0,\qquad E=B(b,b)\ne0.
\end{equation}
Indeed, the derivative first moment gives the last two orthogonality
conditions, and $M_{21}-M_{12}=\sum_i\lambda_i b_i$ gives the first.
Thus the construction seeks a vector in $W^\perp$ with nonzero
squared length for $B$.

\section{Why six points cannot work}

\begin{theorem}\label{app:thm:barrier}
Let $P$ consist of at most six distinct points on a circle.
For any derivation $D$ and any fixed real weights $\lambda_p$, if
\[
                  \sum_{p\in P}\lambda_p\|D(p')\|^2=C
\]
holds for every congruent copy $P'$ in $\R^3$, then $C=0$.
\end{theorem}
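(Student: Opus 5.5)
The plan is to show that a constant identity $\sum_p\lambda_p\|D(p')\|^2=C$ over all congruent copies in $\R^3$ forces the hypotheses of Lemma~\ref{app:lem:criterion}, and hence the conditions \eqref{app:eq:angular}. Then $C=E=B(b,b)$, and a dimension count for the isotropic subspace $W$ gives $B(b,b)=0$ when there are at most six points. If $D=0$ then $C=0$ trivially, so I assume $D\ne0$ and fix $s$ with $D(s)=1$. After scaling and translating, I assume the circle is the unit circle centred at the origin. This changes $C$ by no quantity that matters: the identity is invariant under replacing $P$ by a congruent copy, and a rational scaling commutes with $D$.

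\emph{Step 1: extracting the moment conditions.} Write copies as $p'=t+Ap$ with $A^{\mathsf T}A=I_2$, $A$ a $3\times2$ matrix. Expand $\sum\lambda_p\|D(t)+D(A)p+AD(p)\|^2$ as in the proof of Lemma~\ref{app:lem:criterion}.
\begin{itemize}
\item Translations $t=cs\,e$ with rational $c$ give $D(t)=c\,e$ arbitrary in $\Q^3$. Constancy of the quadratic in $c$ gives $\sum\lambda_p=0$ and $\sum_p\lambda_p\bigl(D(A)p+AD(p)\bigr)=0$.
\item To vary $D(A)$, I take $A=RA_0$, where $R$ is the Cayley transform $(I-sK)^{-1}(I+sK)$ of a rational skew matrix $K$. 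Then $R$ has entries in $\Q(s)$ and $D(R)$ ranges over all rational skew multiples of $R$; one can also compose with a rational rotation.
\item At $A_0$ the standard embedding, this gives $D(A)=KA_0$ for any rational skew $3\times3$ matrix $K$. Its normal part $n w^{\mathsf T}$, where $n=e_3$ and $w$ is an arbitrary rational vector in $\R^2$, is available only because we are in $\R^3$.
\item Scaling $K$ by rational $c$, the coefficient of $c^2$ must vanish. This gives $\sum\lambda_p\|Kp\|^2=0$ for all such $K$, and in particular $w^{\mathsf T}\bigl(\sum\lambda_p pp^{\mathsf T}\bigr)w=0$ for all rational $w$, hence $\sum\lambda_ppp^{\mathsf T}=0$.
\item The linear coefficient gives $\operatorname{tr}(K A_0 M)=0$ for all skew $K$, i.e.\ $M$ is symmetric. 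The translation-linear condition then yields $\sum\lambda_pD(p)=0$ and $\sum\lambda_p p=0$, using the $c$ and $c'$ cross terms with independent scalings.
\end{itemize}
At this point all conditions of Lemma~\ref{app:lem:criterion} hold, and its computation shows $C=E=\sum\lambda_p\|D(p)\|^2=B(b,b)$, with $b\in W^\perp$ by \eqref{app:eq:angular}.

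\emph{Step 2: linear algebra.} Discard points with $\lambda_p=0$; they contribute nothing to any condition or to $E$. On the remaining $m\le6$ points $B$ is nondegenerate, so $\dim W^\perp=m-\dim W$.
\begin{itemize}
\item If $m\ge3$, the vectors $\mathbf1,x,y$ are linearly independent, since three distinct points of a circle are not collinear (no nonzero affine function vanishes on them). So $\dim W=3$. Moreover $W\subseteq W^\perp$ forces $3\le m-3$, so $m=6$ and $W^\perp=W$. Then $b\in W$, and since $W$ is totally isotropic, $E=B(b,b)=0$.
\item If $m\le2$, the conditions $\sum\lambda_p=0$ and $\sum\lambda_p p=0$ with nonzero weights force $m=0$: for $m=2$ they force $p_1=p_2$, contradicting distinctness, and for $m=1$ they force $\lambda_1=0$. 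So $E=0$ in this case as well.
\end{itemize}

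\emph{Main obstacle.} I expect the main difficulty to be Step 1: justifying that the available copies give enough independent variation of $D(t)$ and $D(A)$ to separate the polynomial coefficients.

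My first approach is the Cayley-transform parametrisation over $\Q(s)$ composed with rational scalings. If the cross terms between the tangential part $A_0K'$ and the normal part $nw^{\mathsf T}$ of $D(A)$ get entangled, I would restrict to special $K$: first purely normal rotations about axes in the plane, then purely in-plane ones. Each family is read off separately, using that a polynomial in a rational parameter that is constant must have vanishing non-constant coefficients.
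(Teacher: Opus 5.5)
Your overall strategy matches the paper's. Translations, out-of-plane tilts and in-plane rotations in $\R^3$ force the zeroth, first and quadratic moments, $\sum_p\lambda_pD(p)=0$ and the symmetry of $M$; then $W$ is totally isotropic for the nondegenerate form $B$. Ruling out supports of size $3,4,5$ by the count $3\le m-3$, rather than by the paper's separating quadratics, is a valid alternative; it is the count used in Theorem~\ref{app:thm:dimbarrier}.

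The one real gap is your normalization to the unit circle. A rational scaling cannot bring an arbitrary radius $\rho$ to $1$. When $D(\rho)\ne0$ this matters: that is precisely the situation of Theorem~\ref{thm:main}, where $\rho=r$ and $D(r)=1$. There the energy of $q'/\rho$ equals $\rho^{-2}\sum_p\lambda_p\|D(q')\|^2$ plus the extra terms $-2\rho^{-3}D(\rho)\sum_p\lambda_p\langle D(q'),q'\rangle+\rho^{-4}D(\rho)^2\sum_p\lambda_p\|q'\|^2$, which are not known to be constant before Step~1. Moreover, on the actual circle (centred at the origin) $\langle D(p_i),p_i\rangle=\rho D(\rho)\ne0$, so $D(p_i)=b_iJp_i$ and $E=B(b,b)$ are false as written.

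The repair is the paper's last step:
\begin{itemize}
\item Run Step~1 on $P$ itself; it never uses the normalization.
\item Translate the centre to the origin and write $p_i=\rho z_i$, so that $D(p_i)=D(\rho)z_i+\rho b_iJz_i$.
\item The $D(\rho)$-contributions to $\sum_i\lambda_iD(p_i)$ and to $M$ are multiples of $\sum_i\lambda_iz_i$ and $\sum_i\lambda_iz_iz_i^{\mathsf T}$, which vanish. So $b\in W^\perp$ still holds, with $W$ built from the $z_i$.
\item Hence $C=D(\rho)^2\sum_i\lambda_i+\rho^2B(b,b)=\rho^2B(b,b)=0$.
\end{itemize}

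Step~1 also miscomputes the Cayley derivative. On $\Q(s)$ one has $D=D(s)\,d/ds$, and a direct computation gives $D(R)=R\Omega$ with
\[
\Omega=2D(s)K(I-s^2K^2)^{-1}=\frac{2D(s)}{1+s^2|w|^2}K,
\]
where $w$ is the axis vector of $K$. So $D(A)=KA_0$ is false, and replacing $K$ by $cK$ multiplies $K$ by $\mu(c)=2cD(s)/(1+c^2s^2|w|^2)$, not by $c$. Your coefficient extraction still works. The energy of the copy $RA_0P$ is $\mu^2\sum_p\lambda_p\|KA_0p\|^2+2\mu\sum_p\lambda_p\langle KA_0p,A_0D(p)\rangle+E$, a quadratic polynomial in $\mu$, and $\mu(c)$ takes infinitely many values as $c$ ranges over $\Q$.

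Alternatively, as in the paper, no families are needed. For a pure tilt the cross term vanishes identically, since $KA_0p$ is normal to the plane while $A_0D(p)$ lies in it. For an in-plane rotation the quadratic term is a multiple of $\sum_p\lambda_p\|p\|^2=0$.

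Two minor points:
\begin{itemize}
\item A nonzero derivation need not take the value $1$. For example, take $D(\tau)=\tau$ on one transcendence basis element $\tau$ and $D=0$ on the others; then $D(s)=1$ would require an algebraic antiderivative of $1/\tau$, and $\log\tau$ is not algebraic. So just fix $s$ with $D(s)\ne0$.
\item Your trace should read $\operatorname{tr}(A_0^{\mathsf T}KA_0M^{\mathsf T})$.
\end{itemize}
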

\begin{proof}
The case $D=0$ is immediate, so assume $D\ne0$.
We first show that the ordinary moment conditions are necessary,
rather than merely sufficient. Choose $u$ with
$D(u)\ne0$, and put
\[
 c=\frac{1-u^2}{1+u^2},\quad s=\frac{2u}{1+u^2},\quad
 h=D(c)^2+D(s)^2=\frac{4D(u)^2}{(1+u^2)^2}>0.
\]
The two embeddings $(x,y)\mapsto(x,y,0)$ and
$(x,y)\mapsto(cx,y,sx)$ are isometric, and
\[
 \|D(cx,y,sx)\|^2=\|D(x,y)\|^2+h x^2,
\]
since $c^2+s^2=1$ and $cD(c)+sD(s)=0$.
The assumed identity therefore forces $\sum_p\lambda_p x_p^2=0$.
Apply the same comparison to every translate of $P$:
$\sum_p\lambda_p(x_p+a)^2=0$ for every $a\in\R$.
Repeating in the $y$ and $(x+y)/\sqrt2$ directions gives
\begin{equation}\label{app:eq:necessary}
 \sum_p\lambda_p=0,\qquad
 \sum_p\lambda_p p=0,\qquad
 \sum_p\lambda_p pp^{\mathsf T}=0.
\end{equation}

At any specified point among at most five distinct circle points,
a quadratic polynomial can be nonzero while vanishing at all the
others: take the product of two line equations covering the other
points and avoiding the specified point. Thus \eqref{app:eq:necessary}
forces every weight to vanish for at most five points.
For six points, we may therefore assume that all six weights are nonzero.

Translation invariance, together with $\sum_p\lambda_p=0$, gives
$\sum_p\lambda_p D(p)=0$. Planar rotation invariance also gives
\begin{equation}\label{app:eq:rotation}
                       \sum_p\lambda_p\langle Jp,D(p)\rangle=0.
\end{equation}
To see this, take $A=\left(\begin{smallmatrix}c&-s\\s&c\end{smallmatrix}\right)$.
Then $D(A)=A\omega J$, where $\omega=2D(u)/(1+u^2)\ne0$.
The difference between the energies of $AP$ and $P$ is
$2\omega\sum_p\lambda_p\langle Jp,D(p)\rangle
+\omega^2\sum_p\lambda_p\|p\|^2$, whose last term vanishes by
\eqref{app:eq:necessary}.

Translate the circle's centre to the origin and write $p_i=\rho z_i$,
where $z_i=(x_i,y_i)$ is on the unit circle.
Write $D(z_i)=b_iJz_i$.
The zero first moments and \eqref{app:eq:rotation} imply
$\sum_i\lambda_i b_i=\sum_i\lambda_i b_i x_i
=\sum_i\lambda_i b_i y_i=0$.
For $B$ and $W$ defined using these unit-circle coordinates, we have
$W\subseteq W^\perp$. All weights are nonzero, so $B$ is nondegenerate;
both spaces have dimension three, giving $W=W^\perp$.
Hence $b\in W$ and $B(b,b)=0$. Finally,
\[
 C=\sum_i\lambda_i\|(D\rho)z_i+\rho b_iJz_i\|^2
   =(D\rho)^2\sum_i\lambda_i+\rho^2 B(b,b)=0.
\]
\end{proof}

\section{Almost every seven-point circular set}

\begin{theorem}\label{app:thm:generic}
Put
\[
 p(t)=\left(\frac{1-t^2}{1+t^2},\frac{2t}{1+t^2}\right).
\]
Fix three distinct real numbers $t_1,t_2,t_3$.
If $t_4,t_5,t_6,t_7$ are algebraically independent over
$K=\Q(t_1,t_2,t_3)$, then $\{p(t_1),\ldots,p(t_7)\}$ is not Ramsey.
Consequently almost every seven-point subset of a fixed circle
is not Ramsey.
\end{theorem}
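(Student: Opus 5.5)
We will apply Lemma~\ref{app:lem:criterion} in its angular form \eqref{app:eq:angular}. Let $x,y\in\R^7$ be the coordinate vectors of $p_i=p(t_i)$. The three steps are: (i) find weights $\lambda$ with $W\subseteq W^\perp$; (ii) choose a derivation $D$ whose angular derivative vector $b$ can be essentially arbitrary on the last four coordinates; (iii) show that, for generic $t_4,\dots,t_7$, some admissible $b$ has $B(b,b)\neq0$.

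\textbf{Weights.} The condition $W\subseteq W^\perp$ says $\sum_i\lambda_i f(p_i)=0$ for $f\in\{1,x,y,x^2,xy,y^2\}$. On the circle $x^2+y^2=1$, so this is a homogeneous system of at most five linear equations in seven unknowns. Its solution space therefore has dimension at least two. We take it over the field $L=K(t_4,\dots,t_7)$, so its elements are rational functions of the $t_i$.

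\textbf{Derivation.} Choose a transcendence basis $S_0$ of $K$ over $\Q$. By algebraic independence, $S_0\cup\{t_4,\dots,t_7\}$ extends to a transcendence basis of $\R/\Q$. Define $D$ to be $0$ on $S_0$ and on all other basis elements except the $t_i$, and set $D(t_i)=d_i$ for $i=4,\ldots,7$, with the $d_i\in\R$ arbitrary. Extend $D$ uniquely to $\R$, as in the proof of Theorem~\ref{thm:main}. Then $D$ vanishes on $K$, so $b_1=b_2=b_3=0$. Differentiating $p(t)$ gives $D(p(t_i))=\frac{2d_i}{1+t_i^2}Jp(t_i)$, hence $b_i=2d_i/(1+t_i^2)$. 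Thus $b$ ranges over all vectors supported on $\{4,5,6,7\}$. The conditions $B(b,\mathbf1)=B(b,x)=B(b,y)=0$ become three linear equations in four unknowns, with coefficient rows $(\lambda_i)$, $(\lambda_ix_i)$ and $(\lambda_iy_i)$ for $i=4,\dots,7$. They always have a nonzero solution. Generically the solution is given by the signed $3\times3$ minors, so $b_i=\lambda_j\lambda_k\lambda_l\,\Delta_{jkl}$, where $\Delta_{jkl}$ is the determinant with rows $(1,x_m,y_m)$ for $m\in\{j,k,l\}$. Then
\[
 E=B(b,b)=\sum_{i=4}^7\lambda_i b_i^2 .
\]
Since $D(t_i)$ may be any real number, we simply choose these $b_i$ (the $t_i$ and $\lambda$ are real numbers once fixed).

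\textbf{Main obstacle: nonvanishing of $E$.} The quantity $E$, with the choice of $\lambda$ from the weights step, is a rational function of $t_4,\dots,t_7$ with coefficients in $K$. Since the $t_i$ are algebraically independent over $K$, it suffices to show that $E$ is not identically zero. It is enough to exhibit one specialisation of $(t_4,\dots,t_7)$, with $t_1,t_2,t_3$ fixed, at which the chosen $\lambda$ is still a regular nonzero solution of the moment system and $E\neq0$. We must also handle $\lambda$ varying in a two-dimensional family. We will take $\lambda$ to be a fixed $L$-rational basis vector of the kernel, and if $E$ vanishes identically for it, try a generic combination. It must also be excluded that $\lambda_4,\dots,\lambda_7$ vanish identically. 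Here Theorem~\ref{app:thm:barrier} shows what goes wrong with fewer points, and Theorem~\ref{thm:main} indicates that seven points can work.

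A natural specialisation is a Möbius-type image of the configuration of Theorem~\ref{thm:main}. In general, however, $t_1,t_2,t_3$ are fixed, which prevents transporting that configuration. The robust route is a direct computation, for example letting $t_7\to t_4$, where the leading-order behaviour of $E$ is a simple nonzero expression. This degeneration computation is the step I expect to be hardest.

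\textbf{Almost every set.} For the measure statement, parametrise the circle minus one point by $t$. For each fixed $(t_1,t_2,t_3)$, the set of $(t_4,\dots,t_7)$ that are algebraically dependent over $K$ is a countable union of zero sets of nonzero polynomials, hence Lebesgue-null. By Fubini, the set of bad $7$-tuples is null, and relabelling the points does not affect this.
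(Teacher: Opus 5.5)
There is a genuine gap at exactly the step you flag and leave open: you never prove $E=B(b,b)\neq0$. The rest of your set-up is sound: the derivation vanishing on $K$ with free values $D(t_4),\dots,D(t_7)$, the formula $b_i=2D(t_i)/(1+t_i^2)$, the reduction of the mixed conditions to three linear equations in $b_4,\dots,b_7$, and the measure argument.

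A nonzero admissible $b$ exists by counting alone (three equations, four unknowns). So the nonvanishing of $E$ is the entire content of the theorem. Compare Theorem~\ref{app:thm:barrier}, where admissible $b$ exist but always satisfy $B(b,b)=0$.

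Your proposed choice of weights can also fail identically. The basis of the two-dimensional kernel produced by elimination (one free variable set to $1$, the other to $0$) consists of vectors with a zero entry. Any kernel vector with $\lambda_j=0$ is really a six-point certificate, so $E\equiv0$ by Theorem~\ref{app:thm:barrier}. The fallback ``try a generic combination'' and the degeneration $t_7\to t_4$ are exactly where the work lies, and neither is carried out.

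The missing idea is to work in the stereographic parameter. There every condition becomes a Lagrange interpolation identity, and the energy can be computed exactly.
Put $H(T)=\prod_{i=1}^7(T-t_i)$ and $g(T)=(T-t_1)(T-t_2)(T-t_3)$. Use $\sum_i h(t_i)/H'(t_i)=0$ for $\deg h\le5$, with value $1$ for $h=T^6$.
\begin{itemize}
\item Take $\lambda_i=(1+t_i^2)^2/H'(t_i)$. Then each of $\lambda_i$ times $1,x_i,y_i,x_i^2,x_iy_i,y_i^2$ has the form $h(t_i)/H'(t_i)$ with $\deg h\le4$, so all ordinary moments vanish.
\item Prescribe $D(t_i)=g(t_i)$ for $i\ge4$. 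This is automatically consistent with $D|_K=0$, since $g$ vanishes at $t_1,t_2,t_3$.
\item Then $\lambda_ib_i(1,x_i,y_i)=2g(t_i)(1+t_i^2,1-t_i^2,2t_i)/H'(t_i)$ has numerators of degree at most five, so the mixed conditions hold.
\item Finally $E=4\sum_ig(t_i)^2/H'(t_i)=4$, because $g^2$ is monic of degree six.
\end{itemize}
No specialisation is needed.

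Your route can be completed the same way. A general kernel vector is $\lambda_i=(1+t_i^2)^2(at_i+c)/H'(t_i)$. When all seven weights are nonzero, the forced velocities are proportional to $g(t_i)/(at_i+c)$. Partial fractions then give $E\neq0$ exactly when all seven weights are nonzero; for $a\neq0$, $E$ is a nonzero multiple of $g(z)^2/H(z)$ with $z=-c/a$. That interpolation computation, however, is precisely the one you did not supply.
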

\begin{proof}
Put
\[
 H(T)=\prod_{i=1}^7(T-t_i),\qquad
 g(T)=(T-t_1)(T-t_2)(T-t_3).
\]
Polynomial interpolation gives
\begin{equation}\label{app:circle:interp}
 \sum_i\frac{t_i^k}{H'(t_i)}=0\quad(0\le k\le5),\qquad
 \sum_i\frac{t_i^6}{H'(t_i)}=1.
\end{equation}
Choose a derivation vanishing on $K$ with $D(t_i)=g(t_i)$ for
$4\le i\le7$. Algebraic independence allows these prescriptions;
extend to a transcendence basis and then algebraically to $\R$.
Since $g(t_i)=0$ for $i\le3$, the formula holds for all seven indices.
Writing $p(t_i)=(x_i,y_i)$, differentiation gives
\[
 D(p(t_i))=b_iJp(t_i),\qquad
 b_i=\frac{2g(t_i)}{1+t_i^2}.
\]
Take $\lambda_i=(1+t_i^2)^2/H'(t_i)$.
The ordinary zeroth, first and quadratic moments vanish by
\eqref{app:circle:interp}: after multiplication by $\lambda_i$,
each coordinate polynomial has the form $h(t_i)/H'(t_i)$ with
$\deg h\le4$. The remaining linear conditions are
\[
 \sum_i\lambda_i b_i(1,x_i,y_i)
   =2\sum_i\frac{g(t_i)}{H'(t_i)}(1+t_i^2,1-t_i^2,2t_i)=0,
\]
since each numerator has degree at most five. Finally, $g$ is a
monic cubic, so
\[
 E=\sum_i\lambda_i b_i^2
      =4\sum_i\frac{g(t_i)^2}{H'(t_i)}=4.
\]
Thus the circle criterion \eqref{app:eq:angular} and
Lemma~\ref{app:lem:criterion} apply.

Algebraic independence of all seven parameters over $\Q$ holds
outside a countable union of polynomial zero sets, a set of Lebesgue
measure zero. Stereographic parametrization transfers this conclusion
to arc-length measure on the circle; similarities give the statement
for any fixed circle.
\end{proof}

\newpage
\section{Several derivations give no stronger certificates}

The following reduction holds for arbitrary finite Euclidean sets,
not just for points on a circle.

\begin{theorem}\label{app:thm:single_reduction}
Let $P\subset\R^d$ be finite, let $D_1,\ldots,D_k$ be
derivations, and let $C=(c_{ab})$ be a real symmetric matrix. Put
\[
 F(q)=\sum_{a,b=1}^k c_{ab}
                   \langle D_a(q),D_b(q)\rangle.
\]
Suppose fixed real weights $\lambda_p$ and a nonzero constant $K$
satisfy $\sum_{p\in P}\lambda_pF(p')=K$ for every congruent
copy $P'$ in $\R^{d+1}$. Then there is a single derivation $D$
and a nonzero constant $E$ such that
\[
             \sum_{p\in P}\lambda_p\|D(p')\|^2=E
\]
for every congruent copy in every Euclidean dimension.
\end{theorem}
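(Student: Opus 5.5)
The plan is to diagonalize $C$, decouple the resulting derivations, and check that one of them satisfies the hypotheses of Lemma~\ref{app:lem:criterion}; the proof of that lemma then gives the conclusion. Real linear combinations of derivations are again derivations, so the derivations form a real vector space.

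\emph{Diagonalization.} Write $C=\sum_j\mu_j e_je_j^{\mathsf T}$ with orthonormal eigenvectors $e_j$. Set $E_j=\sum_a(e_j)_aD_a$, so that
\[
F(q)=\sum_j\mu_j\|E_j(q)\|^2 .
\]
Discard every $j$ with $\mu_j=0$. Among the remaining $E_j$, pass to an $\R$-linearly independent subfamily, re-expressing $F$ through a new symmetric matrix on that subfamily and diagonalizing again. After finitely many steps the $E_1,\dots,E_m$ are $\R$-linearly independent and every $\mu_j\ne0$. If $m=0$ then $F\equiv0$, so $K=0$, a contradiction; hence $m\ge1$.

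Linear independence of finitely many additive functions gives reals $x_1,\dots,x_m$ with $\bigl(E_i(x_l)\bigr)$ invertible. Hence the additive map $v(u)=(E_1(u),\dots,E_m(u))$ has image containing a basis of $\R^m$. This image is a $\Q$-subspace, so any real polynomial of degree at most two in $v$ that vanishes on the image vanishes identically on $\R^m$. The argument is polarization over $\Q$ followed by real bilinearity.

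\emph{Ordinary moments.} I would redo the first step of the proof of Theorem~\ref{app:thm:barrier}, comparing $(x,y,\dots)\mapsto(x,\dots,0)$ with the tilted embedding $(cx,\dots,sx)$ in $\R^{d+1}$. The energy changes by $x_p^2\,Q(u)$, where
\[
Q(u)=\frac{4}{(1+u^2)^2}\sum_j\mu_jE_j(u)^2 .
\]
The quadratic form $\sum_j\mu_j v_j^2$ is nondegenerate and the image of $v$ spans $\R^m$. So some $u$ has $Q(u)\ne0$, which forces $\sum_p\lambda_px_p^2=0$. Applying this to all translates and to all directions gives the zeroth, first and second moment conditions of \eqref{app:eq:moments}.

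\emph{Decoupling.} Expand $\sum_p\lambda_pF(t+Ap)$ exactly as in Lemma~\ref{app:lem:criterion}; by the ordinary moments, the surviving variable terms are linear.
\begin{itemize}
\item \emph{Translations.} Translation invariance gives $\sum_j\mu_j\langle E_j(t),m_j\rangle=0$ for all $t$, where $m_j=\sum_p\lambda_pE_j(p)$. Each coordinate of $t$ ranges over all of $\R$, so the vector $(E_j(t_i))_j$ ranges over the image of $v$. The image spans $\R^m$, so $\mu_jm_j=0$ and hence $m_j=0$ for every $j$.
\item \emph{Rotations.} Take rotations in a coordinate plane with parameter $u$, as in \eqref{app:eq:rotation}. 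Then $D(A)=A\,\omega J$ with $\omega=\omega(u)$ depending linearly on $v(u)$ after clearing the scalar factor $2/(1+u^2)$. The identity becomes
\[
\sum_j\mu_j\,\omega_j\operatorname{tr}\bigl(J^{\mathsf T}M_j\bigr)=0,\qquad M_j=\sum_p\lambda_pE_j(p)p^{\mathsf T},
\]
together with a term $\sum_j\mu_j\omega_j^2\sum_p\lambda_p\|p\|^2$ that vanishes by the ordinary moments. Letting $\omega$ vary over a spanning set forces the antisymmetric part of each $M_j$ in that plane to vanish. Doing this in every coordinate plane shows that every $M_j$ is symmetric.
\end{itemize}

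\emph{Conclusion.} Each $E_j$ now satisfies all hypotheses of Lemma~\ref{app:lem:criterion} except possibly $E\ne0$. Its proof shows that $\sum_p\lambda_p\|E_j(p')\|^2$ equals the constant $\mathcal E_j=\sum_p\lambda_p\|E_j(p)\|^2$ in every dimension. Then $K=\sum_j\mu_j\mathcal E_j\ne0$, so some $\mathcal E_j\ne0$, and $D=E_j$, $E=\mathcal E_j$ work.

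The main obstacle is the decoupling step: arranging that $v(u)$ genuinely ranges over a spanning set. This is why I first pass to a linearly independent family of eigen-derivations, and I expect the bookkeeping of the rotation terms (the $\omega$'s coming from different $E_j$) to need the most care.
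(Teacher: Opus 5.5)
Your proof is correct and follows the paper's argument essentially step for step. You reduce to $\R$-linearly independent derivations with an invertible diagonal coefficient matrix, and use the spanning of $v(u)$ to get $Q(u)\ne0$. You then obtain the ordinary moments from the tilt, the vanishing derivative first moments from translations, and the symmetry of each $\sum_p\lambda_pE_j(p)p^{\mathsf T}$ from rotations with $\omega(u)=2v(u)/(1+u^2)$, before concluding via Lemma~\ref{app:lem:criterion}. The only cosmetic difference is at the end: you pick an eigen-derivation $E_j$ with $\mathcal E_j\ne0$, whereas the paper picks $D=\sum_a z_aD_a$ with $z^{\mathsf T}Hz\ne0$, and once $C$ is diagonal these amount to the same choice.
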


\begin{proof}
Replace the derivations by a basis of their real linear span and then
diagonalize the coefficient matrix, discarding its zero directions.
We may assume that $D_1,\ldots,D_k$ are linearly independent over
$\R$ and that $C$ is invertible. The vectors
$v(u)=(D_1u,\ldots,D_ku)$ span $\R^k$. Moreover,
$Q(u)=v(u)^{\mathsf T}Cv(u)$ is nonzero for some $u$: otherwise
polarizing $Q(u+w)=0$ would show that $C$ vanishes on their span.

For such a $u$, put $c=(1-u^2)/(1+u^2)$ and $s=2u/(1+u^2)$.
Tilting one coordinate $x$ into a new coordinate by
$(x,0)\mapsto(cx,sx)$ changes $F$ by exactly
\[
                         \frac{4Q(u)}{(1+u^2)^2}x^2.
\]
Comparison with the original copy, followed by translations and the
same comparison in the coordinate and pairwise diagonal directions,
therefore gives
\begin{equation}\label{app:eq:reduction_moments}
 \sum_p\lambda_p=0,\qquad \sum_p\lambda_p p=0,
 \qquad \sum_p\lambda_p pp^{\mathsf T}=0.
\end{equation}

Set $\mu_b=\sum_p\lambda_pD_b(p)$. Translation invariance and \eqref{app:eq:reduction_moments}
give, for every translation vector $t$,
\[
               \sum_{a,b}c_{ab}\langle D_a(t),\mu_b\rangle=0.
\]
Taking $t=u e_j$ and varying $u$, the vectors $v(u)$ span $\R^k$;
the invertibility of $C$ therefore gives $\mu_b=0$ for every $b$.

Next rotate any coordinate two-plane by the matrix with entries $c,s$
above. If $J$ denotes its infinitesimal rotation, then
$D_a(A)=A\omega_a J$, where
$\omega_a=2D_a(u)/(1+u^2)$. After \eqref{app:eq:reduction_moments} cancels the quadratic terms,
rotation invariance gives
\[
 \sum_{a,b}c_{ab}\omega_a
                     \sum_p\lambda_p\langle Jp,D_b(p)\rangle=0.
\]
As $u$ varies, the vectors $\omega(u)$ span $\R^k$, so each inner sum is zero.
Consequently, for every $b$, the matrix
$\sum_p\lambda_pD_b(p)p^{\mathsf T}$ is symmetric.

These are exactly the moment conditions that make
$\sum_p\lambda_p\|D_b(p')\|^2$ invariant under isometric embeddings:
the expansion in Lemma~\ref{app:lem:criterion} works identically in
$\R^d$. They also hold for every real linear combination of
the $D_b$. Finally, the symmetric matrix
\[
        H_{ab}=\sum_p\lambda_p\langle D_a(p),D_b(p)\rangle
\]
is nonzero, since $\sum_{a,b}c_{ab}H_{ab}=K\ne0$.
Choose $z$ with $z^{\mathsf T}Hz\ne0$ and set
$D=\sum_a z_aD_a$. Its invariant energy is
$E=z^{\mathsf T}Hz\ne0$, as required.
\end{proof}

In particular, the six-point obstruction on a circle remains unchanged
for every quadratic combination of finitely many derivations.

\section{Further seven-point constructions}\label{app:sec:seven}

\subsection*{Stereographic coordinates and M\"obius transformations}

Write
\[
 p(t)=\left(\frac{1-t^2}{1+t^2},\frac{2t}{1+t^2}\right).
\]
For distinct finite real parameters $t_1,\ldots,t_m$, put
$v_i=D(t_i)$ and $w_i=\lambda_i/(1+t_i^2)^2$.
The conditions of Lemma~\ref{app:lem:criterion} on the circle are
equivalent to
\begin{equation}\label{app:circle:moments}
 \sum_iw_it_i^k=0\quad(0\le k\le4),\qquad
 \sum_iw_iv_it_i^k=0\quad(0\le k\le2),\qquad
 \sum_iw_iv_i^2\ne0.
\end{equation}
Indeed, $D(p(t_i))=2v_iJp(t_i)/(1+t_i^2)$; the coordinates
$1,x,y$ span the quadratic polynomials divided by $1+t^2$, and their
products span the polynomials of degree at most four divided by
$(1+t^2)^2$. The corresponding energy is
$4\sum_iw_iv_i^2$.

\begin{lemma}[M\"obius invariance]\label{app:circle:mobius}
Existence of a certificate \eqref{app:circle:moments} is invariant under real
M\"obius transformations of the circle. The same derivation may be used.
\end{lemma}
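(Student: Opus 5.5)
The plan is to transport a certificate directly along the transformation. Write the real M\"obius map in stereographic coordinates as $s=\phi(t)=(at+b)/(ct+d)$ with $\Delta=ad-bc\ne0$. Assume $ct_i+d\ne0$ for all $i$, so that all new parameters $s_i=\phi(t_i)$ are finite. Given a certificate $(w_i,v_i)$ for \eqref{app:circle:moments} with $v_i=D(t_i)$, I will keep the same derivation $D$, so that $v_i'=D(s_i)$. I will set
\[
 w_i'=w_i(ct_i+d)^4 .
\]
Everything then reduces to the fact that clearing denominators turns a polynomial of degree $k$ in $s$ into a polynomial of degree at most $k$ in $t$.

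\textbf{Step 1 (ordinary moments).} For $\deg h\le4$, the function $(ct+d)^4h(\phi(t))$ is a polynomial in $t$ of degree at most $4$. Hence $\sum_i w_i'h(s_i)=\sum_i w_iH(t_i)=0$ with $\deg H\le 4$, by the first group of conditions in \eqref{app:circle:moments}.

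\textbf{Step 2 (derivative of the new parameters).} By the quotient and product rules for $D$,
\[
 D(s_i)=\frac{\Delta v_i+q(t_i)}{(ct_i+d)^2},
\]
where
\[
 q(t)=(D(a)t+D(b))(ct+d)-(at+b)(D(c)t+D(d))
\]
is a real quadratic polynomial. The coefficients of $q$ are fixed reals; they vanish when $a,b,c,d$ are rational, but we never need that. This is why the same $D$ suffices even for arbitrary real coefficients.

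\textbf{Step 3 (derivative moments).} For $\deg h\le2$, we have
\[
 w_i'v_i'h(s_i)=w_i\bigl(\Delta v_i+q(t_i)\bigr)(ct_i+d)^2h(s_i).
\]
Here $(ct+d)^2h(\phi(t))$ has degree at most $2$. So the $\Delta v_i$ part vanishes by the second group of conditions in \eqref{app:circle:moments}. The $q$ part is $w_i$ times a polynomial of degree at most $4$, so it vanishes by the first group.

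\textbf{Step 4 (energy).} We have
\[
 \sum_i w_i'v_i'^2=\Delta^2\sum_i w_iv_i^2+2\Delta\sum_i w_iv_iq(t_i)+\sum_i w_iq(t_i)^2 .
\]
The middle sum vanishes by the derivative moments with $\deg q\le 2$. The last sum vanishes because $\deg q^2\le 4$. So the energy is multiplied by $\Delta^2\ne0$. Invariance in the other direction follows by applying the same argument to $\phi^{-1}$.

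The main obstacle I expect is the point at infinity, i.e.\ the case $ct_i+d=0$ for some $i$, since it corresponds to the circle point $(-1,0)$, which the finite parametrization in \eqref{app:circle:moments} misses. I would handle it by factoring $\phi$ as $\phi=\phi_2\circ\phi_1$. Here $\phi_1$ is a M\"obius map, for instance a rotation of the circle with rational stereographic coefficients, chosen so that neither the $t_i$ nor their images under $\phi$ are moved to infinity at any stage. This is possible because only finitely many values are forbidden. Then Steps 1--4 apply to each factor. Alternatively, one can phrase the conditions homogeneously in projective coordinates $(t:1)$, but the factorization seems simplest.
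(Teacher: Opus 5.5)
The finite case of your proposal is correct and essentially identical to the paper's proof. You use the same formula $D(s_i)=(\Delta v_i+q(t_i))/(ct_i+d)^2$ and the same degree counting for the three groups of conditions. Your weights differ only by the factor $\Delta^{-2}$: the paper takes $\widetilde w_i=w_i(ct_i+d)^4/\Delta^2$, so its energy is exactly preserved, while yours is multiplied by $\Delta^2\neq0$, which is equally fine.

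\textbf{The gap is in your treatment of the point at infinity.}

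\emph{Why the factorization fails.} A factorization $\phi=\phi_2\circ\phi_1$ cannot change where the endpoints go. Suppose $\phi(t_j)=\infty$, which is exactly the case $ct_j+d=0$ you single out. Then $\phi_2(\phi_1(t_j))=\infty$, so the last factor still sends a finite parameter to $\infty$, and Steps 1--4 do not apply to it. Nothing in a factorization helps either when some original $t_j$ is already $\infty$. Conversely, if all $t_i$ and all $\phi(t_i)$ are finite, then $ct_i+d\neq0$ automatically: if $ct_i+d=0$, then $at_i+b\neq0$ since $\Delta\neq0$, so $\phi(t_i)=\infty$. In that case no factorization is needed. So the factorization is either unnecessary or ineffective.

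\emph{What is actually needed.} You need a separate, direct argument for rotations, which is what the paper uses. When a parameter equals $\infty$, \eqref{app:circle:moments} makes no sense, so the certificate must be read in the circle form \eqref{app:eq:angular}.

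A rotation $R$ with rational entries preserves that form with the same weights and the same vector $b$. Indeed, $D(R)=0$ gives $D(Rp_i)=RD(p_i)=b_iJRp_i$, since plane rotations commute with $J$. Moreover, $\operatorname{span}\{\mathbf 1,x,y\}$ is unchanged by the rotation.

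Now choose rational rotations $R_1,R_2$ so that all $R_1(t_i)$ and all $R_2(\phi(t_i))$ are finite; only finitely many rational angles are excluded. Then $\psi=R_2\circ\phi\circ R_1^{-1}$ maps finite parameters to finite parameters, so your Steps 1--4 apply to $\psi$. Finally, transfer the certificate through $R_1$ and $R_2$ by the direct argument, which does not go through \eqref{app:circle:moments} at all.

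Your homogeneous-coordinate alternative might also work, but as written it is only mentioned, not carried out.
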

\begin{proof}
Let $s_i=h(t_i)$, where
\[
 h(t)=\frac{at+b}{ct+d},\qquad \Delta=ad-bc\ne0,
\]
and first suppose all parameters before and after the transformation
are finite. The product rule gives
\[
 D(s_i)=\frac{\Delta v_i+g(t_i)}{(ct_i+d)^2}
\]
for a polynomial $g$ of degree at most two. This remains true when
$D$ does not annihilate the coefficients of $h$.
Choose
\[
 \widetilde w_i=\frac{w_i(ct_i+d)^4}{\Delta^2}.
\]
The first conditions of \eqref{app:circle:moments} for $s_i$ follow from those
for $t_i$, because $(at+b)^k(ct+d)^{4-k}$ has degree at most four.
For the mixed conditions, the corresponding expression is a multiple of
\[
 \sum_iw_i\bigl(\Delta v_i+g(t_i)\bigr)
              (at_i+b)^k(ct_i+d)^{2-k}\qquad(0\le k\le2),
\]
which vanishes by the first two groups of conditions. Finally,
\[
 \sum_i\widetilde w_iD(s_i)^2
 =\frac1{\Delta^2}\sum_iw_i\bigl(\Delta v_i+g(t_i)\bigr)^2
 =\sum_iw_iv_i^2.
\]
The cross term and the square of $g$ vanish by the same conditions.
Apply the argument also to $h^{-1}$ for the converse. Points at infinity
can be avoided by auxiliary circle rotations with rational matrix
entries. Such rotations preserve the original circle certificate
directly, so this reduction does not require the result being proved.
\end{proof}

The lemma concerns the existence of these certificates; it does not
assert that the Ramsey property itself is M\"obius invariant.

\subsection*{Allowing one algebraic relation}

The next result permits a relation involving all seven parameters.

\begin{theorem}\label{app:circle:raw}
If every six of seven distinct real numbers $t_1,\ldots,t_7$ are
algebraically independent over $\Q$, then $\{p(t_1),\ldots,p(t_7)\}$
is not Ramsey.
\end{theorem}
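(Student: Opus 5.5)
The plan is to verify the circle criterion \eqref{app:circle:moments} directly, using a derivation that is free on six of the parameters. \textbf{Derivation.} Since every six of the $t_i$ are algebraically independent, $t_1,\ldots,t_6$ are algebraically independent and $t_7$ is algebraic over $\Q(t_1,\ldots,t_6)$. Let $P\in\Q[T_1,\ldots,T_7]$ be an irreducible polynomial vanishing at $t=(t_1,\ldots,t_7)$; it generates the ideal of such relations. A derivation may then prescribe $v_i=D(t_i)$ arbitrarily for $i\le6$, by extending from a transcendence basis containing $t_1,\ldots,t_6$ as in Theorem~\ref{app:thm:generic}. The value $v_7$ is forced by
\[
 L(v):=\sum_{i=1}^7 c_iv_i=0,\qquad c_i=\partial_iP(t).
\]
Here $\partial_7P(t)\ne0$ because $P$ is irreducible, hence separable in $T_7$. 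Conversely, every $v$ with $L(v)=0$ is realized. I also need each $c_i\ne0$. If $\partial_iP(t)=0$, it would be a polynomial vanishing at the generic point of the irreducible hypersurface $Z(P)$, so $\partial_iP\equiv0$ modulo $P$, hence $P$ would not involve $T_i$. That contradicts the independence of the other six parameters.

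\textbf{Weights and the quadratic form.} The conditions $\sum_iw_it_i^k=0$ for $k\le4$ have the two-dimensional solution space $w_i=\ell(t_i)/H'(t_i)$ with $\ell$ affine, by \eqref{app:circle:interp}. First take $\ell=1$. By \eqref{app:circle:interp}, the mixed conditions $\sum_iw_iv_it_i^k=0$ for $k\le2$ hold exactly for $v_i=g(t_i)$ with $\deg g\le3$. For such $v$, $\sum w_iv_i^2$ equals the square of the $t^3$-coefficient of $g$. So if $L(t^3)$ does not lie in the span of $L(1),L(t),L(t^2)$ as a constraint, meaning $L$ is nonzero on some quadratic, then the hyperplane $L=0$ meets the cubics in a genuine cubic $g$, giving $E\ne0$. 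This disposes of the case where $L$ does not vanish on all quadratics.

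\textbf{Bad case.} This is the main obstacle. Suppose $\sum_ic_it_i^k=0$ for $k=0,1,2$, which is the case of a Möbius-invariant relation such as an equality of cross-ratios. If moreover $L(t^3)=0$, the cubic $g=t^3$ already works, so assume $L(t^3)\ne0$. Now take $\ell(t)=t-a$. Polynomials of degree $\le2$ and the vector $v_i=1/(t_i-a)$ span the solutions of the mixed conditions; the latter works since $\sum_i t_i^k/H'(t_i)=0$ for $k\le2$. For $v_i=1/(t_i-a)$, partial fractions give
\[
 \sum_iw_iv_i^2=\sum_i\frac1{(t_i-a)H'(t_i)}=-\frac1{H(a)}\ne0.
\]
So it suffices to find a real $a\ne t_i$ with $L\bigl((t_i-a)^{-1}\bigr)=0$. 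Write
\[
 \sum_i\frac{c_i}{t_i-a}=\frac{N(a)}{\prod_i(t_i-a)},\qquad N(a)=\sum_ic_i\prod_{j\ne i}(t_j-a).
\]
The vanishing of $\sum c_it_i^k$ for $k\le2$ kills the coefficients of $a^6,a^5,a^4$ in $N$. The $a^3$ coefficient is a nonzero multiple of $L(t^3)$, after reducing with the lower moments. Hence $N$ is a real cubic and has a real root $a$. This root is not any $t_i$, since $N(t_i)=c_i\prod_{j\ne i}(t_j-t_i)\ne0$. Thus all $w_i\ne0$, $L(v)=0$ and $E\ne0$.

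\textbf{Conclusion.} In either case \eqref{app:circle:moments} holds, so Lemma~\ref{app:lem:criterion} via \eqref{app:eq:angular} gives non-Ramseyness. Points at infinity and the coefficient bookkeeping of the $a^3$ term are the routine checks remaining.
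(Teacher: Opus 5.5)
Your proposal is correct and follows essentially the paper's proof. It uses the same three-way split: $L$ nonzero on some quadratic (use $t^3+\alpha q$ with $w_i=1/H'(t_i)$), $L$ vanishing on quadratics and on $t^3$ (use $t^3$), and otherwise a real root $z$ of the cubic $N$ with $v_i=1/(t_i-z)$, $w_i=(t_i-z)/H'(t_i)$ and energy $-1/H(z)$. Your irreducible generator $P$ simply plays the role of the paper's minimal-degree relation $f$.

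The one slip is your opening claim that $t_7$ must be algebraic over $\Q(t_1,\ldots,t_6)$. The hypothesis also allows all seven parameters to be algebraically independent; then no $P$ exists, and one just prescribes $D(t_i)=t_i^3$ with $w_i=1/H'(t_i)$, which the paper treats as a separate case.
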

\begin{proof}
Put $H(T)=\prod_{i=1}^7(T-t_i)$. By the interpolation identities
\eqref{app:circle:interp}, any realizable velocity vector of the form
$v_i=t_i^3+q(t_i)$, where $\deg q\le2$, gives
\eqref{app:circle:moments} with $w_i=1/H'(t_i)$: the last sum is $1$.
If all seven parameters are algebraically independent, we may simply
prescribe $D(t_i)=t_i^3$ and extend the derivation to $\R$.

Otherwise the transcendence degree is six. Choose a nonzero polynomial
$f\in\Q[T_1,\ldots,T_7]$ of smallest total degree with
$f(t_1,\ldots,t_7)=0$, and put
$g_i=(\partial f/\partial T_i)(t_1,\ldots,t_7)$.
The polynomial $f$ involves each variable, because the other six
parameters are independent. Thus every partial derivative is a nonzero
polynomial of smaller total degree, and minimality gives $g_i\ne0$.
Any six parameters form a transcendence basis of their field, and the
seventh is separably algebraic over the corresponding rational function
field. Consequently the realizable vectors
$(D(t_1),\ldots,D(t_7))$ are precisely
\begin{equation}\label{app:circle:tangent}
                          \sum_i g_i v_i=0.
\end{equation}
For completeness, prescribe the six values on any transcendence basis
obtained by omitting one $t_i$, and extend to the remaining algebraic
parameter. Differentiating $f=0$ yields its unique value, because
$g_i\ne0$. Extend further to $\R$ by adjoining a transcendence basis
and then taking the unique algebraic extension of the derivation.

If there is a polynomial $q$ of degree at most two with
$\sum_i g_iq(t_i)\ne0$, a suitable vector
$v_i=t_i^3+\alpha q(t_i)$ satisfies \eqref{app:circle:tangent}, and we are done.
We may therefore suppose that
$\sum_i g_it_i^k=0$ for $0\le k\le2$.
If also $\sum_i g_it_i^3=0$, again take $v_i=t_i^3$.
Otherwise the polynomial
\[
 N(z)=\sum_i g_i\prod_{j\ne i}(t_j-z)
\]
has degree exactly three: its leading coefficients in degrees six,
five and four vanish by the three moment identities, while its cubic
coefficient is nonzero. Choose a real root $z$ of $N$.
For each $i$,
\[
 N(t_i)=g_i\prod_{j\ne i}(t_j-t_i)\ne0,
\]
so $z$ differs from all seven parameters. Hence
\[
 v_i=\frac1{t_i-z},\qquad
 w_i=\frac{t_i-z}{H'(t_i)}
\]
satisfy \eqref{app:circle:tangent} and the first two groups of
\eqref{app:circle:moments}, using \eqref{app:circle:interp}. Partial fractions give
\[
 \sum_iw_iv_i^2
 =\sum_i\frac1{(t_i-z)H'(t_i)}=-\frac1{H(z)}\ne0.
\]
The corresponding derivation and weights therefore yield a certificate.
\end{proof}

For $s\ge3$ distinct ordered points of the real projective line, their
\emph{projective moduli} are the $s-3$ values obtained after sending the
first three points to $\infty,0,1$ by a M\"obius transformation.
Their field's transcendence degree over $\Q$ is independent of the
ordering, because the coordinate changes are rational over $\Q$.
Circle points are identified with the projective line by stereographic
projection. In particular, saying that six circle points have three
algebraically independent projective moduli is unambiguous.

\begin{theorem}\label{app:circle:intrinsic}
Let $P$ be seven distinct points on a circle. If every six-point subset
of $P$ has three algebraically independent projective moduli over
$\Q$, then $P$ is not Ramsey.
\end{theorem}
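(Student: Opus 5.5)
Our plan is to reduce to Theorem~\ref{app:circle:raw} using the M\"obius invariance of certificates (Lemma~\ref{app:circle:mobius}). Label the points by stereographic parameters. By a real M\"obius transformation we send three of the points, say $P_1,P_2,P_3$, to $\infty,0,1$; the other four then have parameters $s_4,\dots,s_7$, which are the projective moduli of the whole configuration. These cannot be fed directly to Theorem~\ref{app:circle:raw}, because the three fixed values $\infty,0,1$ are far from algebraically independent. Instead we will use Lemma~\ref{app:circle:mobius} in the opposite direction: choose a second M\"obius transformation $h$ whose coefficients are generic relative to the moduli, and set $t_i=h(s_i)$, with $t_1,t_2,t_3$ the images of $\infty,0,1$. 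Since certificates are M\"obius invariant and the same derivation may be used, a certificate for $t_1,\dots,t_7$ gives one for the original points, and Lemma~\ref{app:lem:criterion} then shows that $P$ is not Ramsey. Points at infinity are handled by an auxiliary rational rotation, as in the proof of the lemma.

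Concretely, choose $t_1,t_2,t_3$ algebraically independent over $L=\Q(s_4,\dots,s_7)$; this is possible because $\R$ has infinite transcendence degree over the countable field $L$. Let $h$ be the unique M\"obius map sending $\infty,0,1$ to $t_1,t_2,t_3$. Then $t_i=h(s_i)$ is a rational function over $\Q$ of $t_1,t_2,t_3,s_i$. It remains to verify the hypothesis of Theorem~\ref{app:circle:raw}: any six of $t_1,\dots,t_7$ are algebraically independent over $\Q$.

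This verification is the main step, and it is a transcendence degree count. Omit one index $k$. The six remaining points contain at least two of $P_1,P_2,P_3$, so we must show that the field generated by these six $t$'s has transcendence degree six. First, the full field $\Q(t_1,\dots,t_7)$ equals $\Q(t_1,t_2,t_3,s_4,\dots,s_7)$, because $h$ is defined over $\Q(t_1,t_2,t_3)$ and is invertible. Hence it has transcendence degree $3+\operatorname{trdeg}\Q(s_4,\dots,s_7)$.

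The hypothesis on six-point subsets says that the moduli of the six points other than $P_k$ have transcendence degree three, and these moduli are invariants of the corresponding six $t$'s. There are two cases.

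\emph{Case $k\ge4$.} The six $t$'s determine $t_1,t_2,t_3$, hence $h$, hence the three $s_j$ with $j\ne k$. Those three moduli are independent, so the field they generate has transcendence degree at least $3+3=6$. Here we use that $t_1,t_2,t_3$ are independent over $L$, so adjoining them adds exactly three.

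\emph{Case $k\le3$.} Only two of the anchors survive, so $h$ is not determined outright. We instead use the intrinsic moduli of the six points; by the ordering-independence remark these are rational over $\Q$ in the $t$'s, and they have transcendence degree three. We expect the delicate point to be showing that the remaining three degrees of freedom, which come from the M\"obius parametrization, stay independent of those moduli. We plan to do this by a degree count: the six $t$'s generate a field containing the three moduli together with the two surviving anchors and one further $h$-coordinate, these six generators are independent by the genericity of $(t_1,t_2,t_3)$ over $L$, and the field has at most six generators, so equality holds. If this argument is too loose, we will fall back on choosing $h$ with coefficients algebraically independent over $L$ and applying the same count.
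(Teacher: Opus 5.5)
Your architecture matches the paper's: apply a M\"obius map that is generic over the field of the configuration, check the hypothesis of Theorem~\ref{app:circle:raw} by counting transcendence degree, and transfer the certificate back with Lemma~\ref{app:circle:mobius}. Your case $k\ge4$ is complete. Case $k\le3$, which you yourself flag as delicate, is not yet an argument. There you need three generators of $\Q(t_2,\ldots,t_7)$ beyond the moduli: the two surviving anchors and a value such as $t_4=h(s_4)$. ``Genericity of $(t_1,t_2,t_3)$ over $L$'' says nothing directly about these three. The generator also has to be a value of $h$ at a point, not $t_1$ itself, since $t_1$ need not lie in $\Q(t_2,\ldots,t_7)$ when $L$ has transcendence degree four. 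Your fallback, choosing the coefficients of $h$ generic over $L$, is the same hypothesis and leaves the same question open.

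The missing observation, which is exactly the paper's key step, is that a M\"obius map is determined rationally by its values at \emph{any} three distinct points. Take $k=1$. The map $h$ is recovered over $\Q(s_4)$ from $h(0)=t_2$, $h(1)=t_3$ and $h(s_4)=t_4$. Hence $t_1=h(\infty)\in L(t_2,t_3,t_4)$, so $L(t_2,t_3,t_4)=L(t_1,t_2,t_3)$, and $t_2,t_3,t_4$ are algebraically independent over $L$. Now $\Q(t_2,\ldots,t_7)$ is generated over $\Q$ by $t_2,t_3,t_4$ together with the three moduli of those six points. These moduli are unchanged by $h$, so they equal the moduli of $(0,1,s_4,\ldots,s_7)$; they therefore lie in $L$ and have transcendence degree three by hypothesis. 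The total is $3+3=6$. Phrased this way, no case split is needed: for any six points, use the first three of them as the anchors. That is how the paper runs the count uniformly, working over the field $K$ of all seven original parameters rather than your moduli field $L$.
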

\begin{proof}
Normalize the circle to the unit circle and choose a stereographic
chart avoiding the seven points. Let $K$ be the finitely generated
field of their seven parameters. Apply a M\"obius transformation whose
three parameters are algebraically independent over $K$; they may be
chosen real, with no pole at the seven points.

For any chosen six points, their transformed coordinates have
transcendence degree six over $\Q$. Here is an explicit way to see the
parameter count. The six coordinates are rationally equivalent to
their three projective moduli and the three coordinates of the first
three points. The moduli are unchanged, and the first three transformed
coordinates are algebraically independent over $K$, because sending
three fixed distinct points to three prescribed distinct images
uniquely determines a M\"obius transformation, rationally in those
images. Thus these six quantities have transcendence degree $3+3=6$.
Every six transformed parameters are therefore algebraically
independent. Apply Theorem~\ref{app:circle:raw} and transfer its certificate back
by Lemma~\ref{app:circle:mobius}.
\end{proof}

Unlike a condition asking for four independent moduli of the full
seven-point set, Theorem~\ref{app:circle:intrinsic} allows the full moduli to have
transcendence degree three. A single relation involving all seven
marked points is permitted, provided no relation survives on a
six-point subset.

\subsection*{An exact cubic test for seven points}

The preceding argument has an exact algebraic formulation. By the
M\"obius invariance just proved, normalize three stereographic parameters
to distinct algebraic numbers $a_1,a_2,a_3$, with all seven parameters
finite. Write the remaining parameters as $s_1,\ldots,s_4$, and put
$A(T)=\prod_{j=1}^3(T-a_j)$. Define the real vector space
\[
 V=\left\{v\in\R^4:
       \sum_{i=1}^4 \frac{\partial f}{\partial X_i}(s)v_i=0
       \text{ whenever }f\in\Q[X_1,\ldots,X_4],\ f(s)=0\right\}.
\]
Equivalently, $V$ consists of all possible vectors
$(D(s_1),\ldots,D(s_4))$. Its dimension is the transcendence degree
$r$ of the projective moduli. The evaluated gradients of generators of
the ideal of algebraic relations span $V^\perp$, and $V$ is their common
kernel.

\begin{theorem}\label{app:thm:cubic_test}
For each vector $g$ in a basis of $V^\perp$, form the binary cubic
\[
 C_g(Z,W)=\sum_{i=1}^4g_i A(s_i)
                           \prod_{\substack{1\le j\le4\\j\ne i}}
                                      (Z-s_jW).
\]
The seven-point set admits a nonzero fixed weighted derivation-energy
certificate if and only if these cubics have a common real projective
zero different from the seven marked parameters.
If $r=4$, the condition is automatic. If $r<4$, there are at most $r$
candidate projective zeros; in particular, $r=3$ requires testing the
roots of a single explicit cubic. The same criterion applies to finite
quadratic combinations of derivations.
\end{theorem}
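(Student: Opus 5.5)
The plan is to reduce the statement to the stereographic conditions \eqref{app:circle:moments} and then solve those linear conditions explicitly.

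\textbf{Reduction.} By Theorem~\ref{app:thm:single_reduction}, finite quadratic combinations of derivations reduce to a single derivation, so it suffices to treat one $D$. The first half of the proof of Theorem~\ref{app:thm:barrier} never used the number of points. It shows that any fixed weighted identity with nonzero constant forces the ordinary moment conditions, $\sum\lambda_pD(p)=0$, and \eqref{app:eq:rotation}. Hence certificates are exactly the solutions of \eqref{app:eq:angular}, equivalently of \eqref{app:circle:moments}. By Lemma~\ref{app:circle:mobius} we may normalize to parameters $a_1,a_2,a_3,s_1,\dots,s_4$. Since the $a_j$ are algebraic, every derivation kills them, so $v=(0,0,0,D(s_1),\dots,D(s_4))$ with $(D(s_i))\in V$. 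Conversely, every vector of $V$ is realized by some derivation, as stated in the definition of $V$.

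\textbf{Solving the linear conditions.} Put $H=AS$ with $S(T)=\prod_i(T-s_i)$. The conditions $\sum_iw_it_i^k=0$ for $k\le4$ at seven distinct nodes force $w_i=\ell(t_i)/H'(t_i)$ with $\deg\ell\le1$, by the interpolation identities \eqref{app:circle:interp} and a dimension count.

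The mixed conditions involve only the four $s$-nodes. They say that the vector $u_i=\ell(s_i)v_i/H'(s_i)$ is orthogonal to $1,s,s^2$. On four nodes this forces $u_i=c/S'(s_i)$ for a scalar $c$. Since $H'(s_i)=A(s_i)S'(s_i)$, this gives
\[
 \ell(s_i)v_i=cA(s_i)\qquad(1\le i\le4).
\]

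Now split into cases according to where $\ell$ vanishes.
\begin{itemize}
\item If $\ell$ vanishes at some $s_i$, then $c=0$ because $A(s_i)\ne0$. All $v_j$ with $j\ne i$ vanish, so the energy $\sum\ell(s_j)v_j^2/H'(s_j)$ is $0$.
\item If $\ell(t)=\beta(t-z)$ with $z\ne s_i$ for all $i$, then $v_i=cA(s_i)/(\beta(s_i-z))$. The residue theorem applied to $A(T)/\bigl((T-z)S(T)\bigr)$, which has degree $-2$, gives energy proportional to $-c^2A(z)/S(z)$. This is nonzero exactly when $z\notin\{a_j\}$ and $c\ne0$.
\item If $\ell$ is constant (the point $z=\infty$), then $v_i\propto A(s_i)$. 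The energy is $\sum A(s_i)/S'(s_i)=1$, the leading coefficient of $A$.
\end{itemize}

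\textbf{Realizability and counting.} The remaining requirement is $v\in V$, that is, $\sum_ig_iv_i=0$ for each $g$ in the basis of $V^\perp$. For finite $z$, multiplying $\sum_ig_iA(s_i)/(s_i-z)$ by $\prod_j(z-s_j)$ gives $\pm C_g(z,1)$. For $z=\infty$ the condition is $\sum_ig_iA(s_i)=0$, the $Z^3$-coefficient of $C_g$, that is, $C_g(1,0)=0$. So certificates correspond exactly to common real projective zeros of the $C_g$ outside the seven marked parameters, which proves the equivalence.

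If $r=4$, then $V^\perp=0$ and there are no constraints. The map $g\mapsto C_g$ is injective, since
\[
 C_g(s_i,1)=g_iA(s_i)\prod_{j\ne i}(s_i-s_j)
\]
and $A(s_i)\ne0$. Hence the $C_g$ span a $(4-r)$-dimensional space of binary cubics. Binary cubics vanishing at $k$ given points form a space of dimension $4-k$, so these cubics have at most $r$ common zeros. For $r=3$ this leaves a single cubic whose roots must be tested.

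I expect the main obstacle to be the bookkeeping in the first step. One must check that the necessity argument of Theorem~\ref{app:thm:barrier} transfers verbatim to seven points. One must also handle the degenerate cases where $z$ hits a marked parameter, or where $\ell\equiv0$, so that no certificate is lost or spuriously gained.
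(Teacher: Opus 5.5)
Your proposal is correct and follows essentially the same route as the paper. The shared steps are linear weights $w_i=\ell(t_i)/H'(t_i)$, velocities satisfying $\ell(s_i)D(s_i)=cA(s_i)$, a partial-fraction evaluation of the energy, the translation of $v\in V$ into $C_g(z,1)=0$ (or $C_g(1,0)=0$ at $z=\infty$), and a linear-independence count on the rational normal cubic. The differences are minor: you impose $D(a_j)=0$ before solving the mixed conditions, and you dispose of zero weights by direct computation instead of citing the six-point barrier. Your bound on common zeros, via injectivity of $g\mapsto C_g$, is the dual form of the paper's observation that any $r+1$ points of the curve are linearly independent.
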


\begin{proof}
For seven distinct finite parameters $t_i$, let
$H(T)=\prod_i(T-t_i)$. The five moment conditions give
$w_i=(at_i+b)/H'(t_i)$. Every weight must be nonzero, by the six-point
barrier. The mixed conditions then say
\[
             D(t_i)=\frac{q(t_i)}{at_i+b},\qquad \deg q\le3.
\]
Modulo quadratic-polynomial velocities, a nonzero-energy solution is
therefore either a nonzero multiple of $1/(t_i-z)$, with $z$ not a
marked parameter, or a cubic-polynomial velocity, corresponding to
$z=\infty$. Indeed, for finite $z$, writing
$D(t_i)=q_2(t_i)+\kappa/(t_i-z)$ gives
$\sum_iw_iD(t_i)^2=-a\kappa^2/H(z)$; for constant $at+b$, the
energy is a nonzero scalar times the square of the cubic coefficient.

Since $D(a_j)=0$, subtract from $1/(T-z)$ its quadratic interpolant
at the three $a_j$. The resulting velocity at $s_i$ is
\[
                        \frac{A(s_i)}{A(z)(s_i-z)}.
\]
Its projective class is represented by
\[
 u_i(Z,W)=A(s_i)\prod_{j\ne i}(Z-s_jW),
\]
and $u_i(1,0)=A(s_i)$ is the cubic-polynomial direction. Thus an
allowable velocity exists precisely when $u(Z,W)\in V$, equivalently
when every $C_g(Z,W)$ vanishes, at a pole other than the seven marked
parameters. Conversely each such vector in $V$ is realized by a
derivation: define it first on $\Q(s_1,\ldots,s_4)$ using the
differentiated relations, extend a transcendence basis to one for
$\R$, and extend through the algebraic extension in characteristic
zero. This proves both implications.

The four polynomials $u_i$ form a basis of binary cubics, since
evaluation at $(s_i,1)$ isolates the $i$th one. Their projective image
is consequently a rational normal cubic. Any $r+1\le4$ distinct
points on this curve are linearly independent, so the $r$-dimensional
space $V$ contains at most $r$ candidate directions. The final claim
follows from Theorem~\ref{app:thm:single_reduction}.
\end{proof}

The common-zero condition can equivalently be checked by taking the
homogeneous greatest common divisor of the cubics and inspecting its
real roots. This characterizes certificates, not Ramsey sets.

\subsection*{Why three independent moduli alone do not suffice}

The preceding test also shows that transcendence degree three is the
generic threshold for this certificate method.  Indeed, when \(r=3\),
\(\mathbb P(V)\) is a real projective plane and its equation restricts
to a real binary cubic on the rational normal cubic, so it has a real
projective zero.  Generically this zero is not marked and gives a
certificate.  For \(r=2\), by contrast, a generic projective line in
\(\mathbb P^3\) misses the cubic.  These are statements about the
certificate method, not about the Ramsey property.

More precisely, for fixed marked parameters the exceptional planes can
be described completely. Writing $L_m$ for a linear form vanishing at a
marked parameter $m$, their binary cubics are exactly $L_mQ$ with $Q$ a
definite real quadratic, together with the finitely many products
$L_{m_1}L_{m_2}L_{m_3}$ of marked factors, with repetition allowed.
Hence all exceptional planes lie in the union of the seven incidence
planes $C_g(m)=0$, while their complement contains a Zariski-open dense
set. Indeed, every real cubic has a real projective zero; it is
exceptional precisely when all its real zeros are marked.

There are nevertheless seven-point configurations with three
independent projective moduli but no certificate of the above kind.
Normalize the three algebraic parameters to
\[
                         -2,\quad 0,\quad 2,
\]
and choose \(s_1,s_2,s_4\) algebraically independent over \(\Q\) and
sufficiently close to \(-3,-1,3\), respectively.  Put
\[
                         s_3=-4-s_1-2s_2.
\]
Then \(V^\perp\) is spanned by \(g=(1,2,1,0)\).  With
\(A(T)=(T+2)T(T-2)\), the cubic from
Theorem~\ref{app:thm:cubic_test}, at the base point
\((s_1,s_2,s_3,s_4)=(-3,-1,1,3)\), is
\[
                         C_g(z,1)=-12(z-3)(z^2+1).
\]
For example, the coefficients of \((z-3)(z^2+1)\) in the Lagrange
basis \(A(s_i)\prod_{j\ne i}(z-s_j)\) are
\(-1/12,-1/6,-1/12,0\).

Since \(g_4=0\), throughout this family the cubic has the factor
\(z-s_4\).  Its remaining real quadratic factor has negative
discriminant near the base point.  Thus its only real projective zero
is the forbidden marked pole \(s_4\), and there is no nonzero
certificate, even from a finite quadratic combination of derivations.

Exactly one six-point subset is nongeneric.  Deleting \(s_4\) leaves
the displayed relation, whereas deleting any of \(s_1,s_2,s_3\)
leaves three independent moduli.  For the three anchor deletions, the
base cubic has nonzero values \(300,36,60\) at \(-2,0,2\), respectively,
so the corresponding forgetful maps have full differential rank
nearby.  This example does not settle the Ramsey status of the
configuration.

\subsection*{One-parameter families and a limitation}

\begin{theorem}[One-parameter families]\label{app:thm:one_parameter}
Let $r$ be transcendental, and let $c_1,\ldots,c_7$ be distinct real
algebraic numbers with $r+c_i>0$. Then
\[
 \left\{p\bigl(\sqrt{r+c_i}\bigr):1\le i\le7\right\}
\]
is not Ramsey.
\end{theorem}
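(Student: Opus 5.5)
The plan is to produce the certificate \eqref{app:circle:moments} directly in stereographic coordinates $t_i=\sqrt{r+c_i}$, and then conclude with Lemma~\ref{app:lem:criterion} through the equivalence stated just before Lemma~\ref{app:circle:mobius}.

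\textbf{Setup.} First fix a derivation $D$ with $D(r)=1$, constructed exactly as in the proof of Theorem~\ref{thm:main}. Put $r$ in a transcendence basis, differentiate formally in $r$, and extend uniquely to the algebraic closure inside $\R$. This $D$ vanishes on all real algebraic numbers, since algebraic elements have unique derivative extensions and $D$ kills $\Q$. In particular $D(c_i)=0$.

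The parameters $t_i=\sqrt{r+c_i}$ are positive and pairwise distinct, because the $c_i$ are distinct. Since $t\mapsto p(t)$ is injective on $\R$, the seven points are distinct. Differentiating $t_i^2=r+c_i$ gives $2t_iD(t_i)=1$, so the velocities are
\[
 v_i=D(t_i)=\frac{1}{2t_i}.
\]

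\textbf{Choice of weights.} The key observation is that these velocities have exactly the form $\kappa/(t_i-z)$ with pole $z=0$, which appeared in the proof of Theorem~\ref{app:thm:cubic_test}. The pole $0$ is not a marked parameter, since every $t_i>0$. So I take
\[
 w_i=\frac{t_i}{H'(t_i)},\qquad H(T)=\prod_{i=1}^7(T-t_i).
\]

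\textbf{Checking \eqref{app:circle:moments}.} All three parts follow from the interpolation identities \eqref{app:circle:interp}.
\begin{itemize}
\item For $0\le k\le4$, $\sum_iw_it_i^k=\sum_i t_i^{k+1}/H'(t_i)=0$, because $k+1\le5$.
\item For $0\le k\le2$, $\sum_iw_iv_it_i^k=\tfrac12\sum_it_i^k/H'(t_i)=0$.
\item For the energy, the partial-fraction identity $\sum_i 1/((t_i-z)H'(t_i))=-1/H(z)$ at $z=0$ gives
\[
 \sum_iw_iv_i^2=\frac14\sum_i\frac{1}{t_iH'(t_i)}=-\frac{1}{4H(0)}.
\]
This is nonzero because $H(0)=-\prod_it_i\ne0$.
\end{itemize}

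\textbf{Conclusion.} By the stated equivalence, the original weights $\lambda_i=w_i(1+t_i^2)^2$ and the derivation $D$ satisfy the hypotheses of Lemma~\ref{app:lem:criterion}, with energy $4\sum_iw_iv_i^2\ne0$. Hence the set is not Ramsey.

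\textbf{Main obstacle.} The step needing the most care is the legitimacy of the derivation: that $D$ really annihilates every $c_i$, and that the square roots $t_i$ receive the forced value $1/(2t_i)$ rather than an arbitrary one. Both hold because $D$ is determined uniquely on elements algebraic over $\Q(r)$, and the $t_i$ lie in such an algebraic extension. The rest is routine verification of the interpolation identities.
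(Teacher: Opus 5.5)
Your proposal is correct and matches the paper's proof: a derivation with $D(r)=1$, velocities $D(t_i)=1/(2t_i)$, weights $w_i=t_i/H'(t_i)$, the interpolation identities for the two linear groups of conditions, and the partial-fraction evaluation $-1/(4H(0))\ne0$ of the energy. The extra checks you include — that the seven points are distinct and that $D$ annihilates the algebraic $c_i$ — are details the paper leaves implicit.
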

\begin{proof}
Put $t_i=\sqrt{r+c_i}$ and $H(T)=\prod_i(T-t_i)$.
A derivation with $D(r)=1$ gives $D(t_i)=1/(2t_i)$.
Take $w_i=t_i/H'(t_i)$ in \eqref{app:circle:moments}.
The first two groups of conditions follow from \eqref{app:circle:interp}, and
\[
 \sum_iw_iD(t_i)^2
 =\frac14\sum_i\frac1{t_iH'(t_i)}
 =-\frac1{4H(0)}\ne0.
\]
\end{proof}

\begin{theorem}[At most three nonalgebraic points]\label{app:circle:sparse}
Suppose a M\"obius transformation sends all but at most three points
of a finite subset $P$ of the unit circle to points with algebraic
coordinates. Then $P$ admits no certificate of the circle derivation
criterion.
\end{theorem}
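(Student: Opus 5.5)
The plan is to transport any putative certificate to a normalized position where the derivation visibly kills all but at most three velocities, and then show that a Vandermonde argument forces the energy to vanish. Suppose, for contradiction, that $P=\{p(t_1),\ldots,p(t_m)\}$ admits weights and a derivation $D$ satisfying \eqref{app:circle:moments} with $\sum_iw_iv_i^2\ne0$. First I would use an auxiliary circle rotation with rational entries, as in the proof of Lemma~\ref{app:circle:mobius}, so that no point of $P$, and none of its images, sits at the pole of the stereographic chart. Such a rotation preserves algebraicity of coordinates, and it preserves the certificate. Then Lemma~\ref{app:circle:mobius} transports the certificate along the given M\"obius transformation $h$, keeping the same derivation $D$. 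We may therefore assume that at least $m-3$ of the parameters $t_i$ are themselves algebraic. Indeed, a point with algebraic coordinates $(x,y)$ has algebraic stereographic parameter $t=y/(1+x)$.

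Next I record that every derivation vanishes on $\overline{\Q}\cap\R$. We have $D=0$ on $\Q$ (as in the main proof). If $\alpha$ is algebraic with minimal polynomial $f$, applying $D$ to $f(\alpha)=0$ gives $f'(\alpha)D(\alpha)=0$, and $f'(\alpha)\ne0$ by separability in characteristic zero. Hence $v_i=D(t_i)=0$ for every index outside a set $S$ with $|S|\le3$.

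The key step is the mixed condition $\sum_i w_iv_it_i^k=0$ for $0\le k\le2$. Since $v_i=0$ off $S$, this reads $\sum_{i\in S}(w_iv_i)t_i^k=0$ for $k=0,1,2$. The parameters $t_i$, $i\in S$, are at most three distinct reals, so the corresponding Vandermonde system (a $|S|\times 3$ submatrix, of full row rank $|S|$) has only the trivial solution. Thus $w_iv_i=0$ for every $i\in S$, and hence for every $i$. It follows that
\[
 \sum_i w_iv_i^2=\sum_i (w_iv_i)\,v_i=0,
\]
which contradicts the certificate. Note that the argument never uses the pure moment conditions on $w$, and it does not need the six-point barrier.

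The only delicate point I expect is the bookkeeping in the M\"obius transfer: checking that Lemma~\ref{app:circle:mobius} really lets us work in the transformed coordinates with the same $D$, even though $D$ need not kill the coefficients of $h$ (the lemma explicitly allows this). The second check is that the chart can be chosen so that all transformed parameters are finite. Both are covered by the lemma and its rational-rotation remark, so the main content reduces to the three-point Vandermonde observation above.
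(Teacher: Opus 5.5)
Your proposal is correct and is essentially the paper's proof: transfer by Lemma~\ref{app:circle:mobius} with the same derivation, use that derivations kill algebraic numbers so at most three velocities survive, and then use the mixed linear conditions to force $w_iv_i=0$ (the paper's $\lambda_ib_i=0$), so the energy vanishes. The only difference is cosmetic. You solve a Vandermonde system in $1,t,t^2$ in stereographic coordinates, whereas the paper uses the Cartesian vectors $(1,x_i,y_i)$ and the non-collinearity of three distinct circle points; that route avoids checking that the transformed parameters are finite outside the lemma itself.
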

\begin{proof}
By Lemma~\ref{app:circle:mobius}, it is enough to consider the transformed set.
For any derivation $D$, write $D(p_i)=b_iJp_i$.
Derivations annihilate algebraic numbers, so $b_i=0$ at every point
with algebraic coordinates. The linear conditions in the circle
criterion give
\[
                    \sum_i\lambda_i b_i(1,x_i,y_i)=0.
\]
At most three summands are nonzero, and the corresponding vectors
$(1,x_i,y_i)$ are linearly independent: three distinct circle points
are not collinear. Thus $\lambda_i b_i=0$ for each $i$, and the energy
$\sum_i\lambda_i b_i^2$ is zero.
\end{proof}

The same limitation applies to finite quadratic combinations of
derivations, by Theorem~\ref{app:thm:single_reduction}. This is a limitation of the certificate method and does
not determine whether the configuration is Ramsey.

\section{Higher-dimensional spheres}

\subsection*{A dimension-dependent limitation}

The moment conditions of Lemma~\ref{app:lem:criterion} are also
necessary for a fixed weighted identity in $\R^{d+1}$, provided $D\ne0$.
Indeed, the tilt used above can be applied to any coordinate direction
and any translate of $P$, and to the directions $(e_j+e_k)/\sqrt2$.
It gives the zeroth, first and quadratic moment conditions.
Translation by an integer multiple of $u e_j$, where $D(u)\ne0$,
then gives $\sum_i\lambda_iD(p_i)=0$.
Finally, rotation in any coordinate two-plane through the angle with
cosine $(1-u^2)/(1+u^2)$ and sine $2u/(1+u^2)$ shows that
$\sum_i\lambda_iD(p_i)p_i^{\mathsf T}$ is symmetric: its skew part
is the only surviving term in the difference of the energies.

\begin{theorem}\label{app:thm:dimbarrier}
Suppose $P=\{p_1,\ldots,p_m\}$ is spherical and affinely spans
$\R^d$, and all weights $\lambda_i$ are nonzero.
If $m\le 2d+2$ and
\[
 \sum_i\lambda_i\|D(p_i')\|^2=C
\]
for every congruent copy of $P$ in $\R^{d+1}$, then $C=0$.
Consequently no spherical set of at most six points admits a nonzero
fixed weighted identity of this form, in any dimension.
\end{theorem}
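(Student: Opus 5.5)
The plan is to reduce the statement to linear algebra for the bilinear form $B(u,v)=\sum_i\lambda_iu_iv_i$ on $\R^m$, as in the proof of Theorem~\ref{app:thm:barrier}. Take $D\ne0$ (the case $D=0$ is trivial). By the remarks preceding the theorem, invariance in $\R^{d+1}$ forces the moment conditions of Lemma~\ref{app:lem:criterion}:
\[
 \sum_i\lambda_i=0,\quad \sum_i\lambda_ip_i=0,\quad \sum_i\lambda_ip_ip_i^{\mathsf T}=0,\quad \sum_i\lambda_iD(p_i)=0,
\]
and $M=\sum_i\lambda_iD(p_i)p_i^{\mathsf T}$ is symmetric.

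\textbf{Step 1: the isotropic subspace.} Put
\[
 W=\operatorname{span}\{\mathbf1,x^{(1)},\ldots,x^{(d)}\}\subseteq\R^m,
\]
where $x^{(k)}_i$ is the $k$th coordinate of $p_i$. Because $P$ affinely spans $\R^d$, we have $\dim W=d+1$. The first three moment conditions say that $B$ vanishes on $W\times W$. Since every weight is nonzero, $B$ is nondegenerate, so an isotropic subspace has dimension at most $m/2$. This gives $d+1\le m/2$, that is $m\ge 2d+2$.

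So if $m<2d+2$ the hypotheses cannot hold at all, and the claim is vacuous. If $m=2d+2$, then $W$ is maximal isotropic and $W=W^\perp$.

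\textbf{Step 2: normalizing the sphere.} Translate so the sphere is centred at the origin and write $p_i=\rho z_i$ with $\|z_i\|=1$. Then
\[
 D(p_i)=D(\rho)z_i+\rho D(z_i),\qquad \langle z_i,D(z_i)\rangle=0.
\]
The term $D(\rho)z_i$ contributes to the energy only through $\sum_i\lambda_i\|z_i\|^2=0$ and through the cross term $2D(\rho)\rho\sum_i\lambda_i\langle z_i,D(z_i)\rangle=0$. Hence we may assume $\rho=1$. The moment conditions are preserved under this rescaling by a scalar.

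\textbf{Step 3: showing $C=0$ (main obstacle).} For each $k$ let $u^{(k)}_i=D(p_i)_k$. Then
\[
 C=\sum_kB(u^{(k)},u^{(k)}).
\]
The available facts are:
\begin{itemize}
\item $B(u^{(k)},\mathbf1)=0$;
\item $B(u^{(k)},x^{(l)})=B(u^{(l)},x^{(k)})$, from the symmetry of $M$;
\item $\sum_k x^{(k)}_iu^{(k)}_i=0$ pointwise, from the sphere condition.
\end{itemize}
In the circle case these forced the single scalar $b$ into $W^\perp=W$. Here the aim is the analogous statement that each $u^{(k)}$ lies in $W$ modulo a part that cancels in the energy.

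My first approach is to apply $D$ to the quadratic moment identity. This gives
\[
 \sum_i D(\lambda_i)p_ip_i^{\mathsf T}+M+M^{\mathsf T}=0.
\]
By symmetry of $M$, this expresses $2M$ through the $D(\lambda_i)$. The same differentiation applied to the zeroth and first moments gives
\[
 \sum_iD(\lambda_i)=0,\qquad \sum_iD(\lambda_i)p_i=0.
\]
I would combine these with $W=W^\perp$: in a hyperbolic basis pairing $W$ with an isotropic complement $W'$, decompose $u^{(k)}=w^{(k)}+w'^{(k)}$ with $w^{(k)}\in W$ and $w'^{(k)}\in W'$. Then
\[
 C=\sum_k\bigl(2B(w^{(k)},w'^{(k)})+B(w'^{(k)},w'^{(k)})\bigr),
\]
and the last term vanishes because $W'$ is isotropic. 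The pairings of $w'^{(k)}$ against $W$ are exactly the quantities $B(u^{(k)},\mathbf1)$ and $B(u^{(k)},x^{(l)})$, which Step~3's facts control through $M$.

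The hope is that $M=0$. If so, every $u^{(k)}$ lies in $W^\perp=W$, and $C=0$ follows at once. To prove $M=0$, I would test the pointwise sphere relation against $B$ and use the symmetry of $M$. If $M=0$ does not follow, a fallback is to reduce to a $D$ with $D(\lambda_i)=0$, or to exploit further invariance under rotations in planes involving the extra coordinate of $\R^{d+1}$. This step is where the real work lies.

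Finally, the consequence follows from Step~1. A spherical set of at most six points affinely spans some $\R^d$; if $m\le 2d+2$ the theorem applies. Otherwise $m\ge 2d+3$, so $d\le1$, and such $P$ is two collinear points, handled by the same argument in dimension $1$. Weights equal to zero can be discarded first, since removing those points lowers $m$ and keeps the hypothesis.
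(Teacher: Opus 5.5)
Your Steps 1 and 2 match the paper. Step 3, however, is the whole content of the theorem once $W=W^\perp$ is known, and you leave it open. The route you propose cannot work in general, because $M$ need not vanish. Your own computation gives $2M=-\sum_iD(\lambda_i)p_ip_i^{\mathsf T}$. Since $D$ need not annihilate the weights, this is typically a nonzero cubic moment.

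Here is an example. Take the six circle points $p(t_i)$ with $t_i=s_i+\tau$, where the $s_i$ are distinct rationals, $\tau$ is transcendental and $D(\tau)=1$. Let $\lambda_i=(1+t_i^2)^2/H'(t_i)$ with $H(T)=\prod_j(T-t_j)$, so that $H'(t_i)\in\Q$. The interpolation identities show that the weights annihilate quadratics, so $W\subseteq W^\perp$. We have $D(p_i)=(1+x_i)Jp_i$, and $\mathbf 1+x\in W$, so all hypotheses hold, with $C=0$. But $D(\lambda_i)/(2\lambda_i)=y_i$ and $M=J\sum_i\lambda_ix_ip_ip_i^{\mathsf T}$. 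This vanishes only if $\sum_i\lambda_i(x_i+\mathrm i\,y_i)^3=0$, which is impossible for six distinct points by the Vandermonde argument in the proof of Theorem~\ref{app:thm:all-jets}.

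Your fallbacks do not help either. The weights here are unique up to a scalar, and rescaling only shifts $D(\lambda_i)/\lambda_i$ by a constant, so you cannot arrange $D(\lambda_i)=0$. Further rotations in $\R^{d+1}$ give nothing new, because Lemma~\ref{app:lem:criterion} shows that the conditions you already have imply invariance in every dimension.

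The missing idea is to absorb $D(\lambda_i)$ into a radial correction of the velocities, rather than trying to make $M$ vanish.
\begin{itemize}
\item Put $\delta_i=D(\lambda_i)/(2\lambda_i)$. Your differentiated zeroth and first moments say exactly $B(\delta,\mathbf 1)=B(\delta,x^{(l)})=0$, so $\delta\in W^\perp=W$.
\item Set $\tilde u_i=D(p_i)+\delta_ip_i$, i.e.\ $\tilde u^{(k)}=u^{(k)}+\delta x^{(k)}$ with the product taken entrywise. The identity $2M=-\sum_iD(\lambda_i)p_ip_i^{\mathsf T}$ says precisely that $B(\tilde u^{(k)},x^{(l)})=0$.
\item Also $B(\tilde u^{(k)},\mathbf 1)=B(u^{(k)},\mathbf 1)+B(\delta,x^{(k)})=0$, so each $\tilde u^{(k)}$ lies in $W$.
\item On the unit sphere, $p_i\cdot D(p_i)=0$ gives $p_i\cdot\tilde u_i=\delta_i$. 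Therefore
\[
 C=\sum_i\lambda_i\|\tilde u_i-\delta_ip_i\|^2
  =\sum_kB(\tilde u^{(k)},\tilde u^{(k)})-2B(\delta,\delta)+B(\delta,\delta)=0,
\]
because $W$ is isotropic.
\end{itemize}
This is the paper's argument. When $\delta$ is constant, one gets $M=0$ and your shortcut applies; in general the shift by $\delta_ip_i$ is what replaces it.

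The rest of your proposal is fine. The ``otherwise'' branch of your last paragraph is in fact empty, since a spherical set of affine dimension $d\le1$ has at most $2\le 2d+2$ points.
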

\begin{proof}
Assume $D\ne0$, and put
$B(a,b)=\sum_i\lambda_i a_i b_i$ and
$W=\operatorname{span}\{\mathbf1,p^{(1)},\ldots,p^{(d)}\}$, where
$p^{(j)}$ is the vector of $j$th coordinates.
The ordinary moments give $W\subseteq W^\perp$.
Since $B$ is nondegenerate and $\dim W=d+1$, necessarily
$m\ge2d+2$. We therefore only need the equality case, when
$W=W^\perp$.

Translate the sphere to the origin and normalize its radius to one.
The moment and mixed-moment conditions are preserved, and the
weighted energy changes by the square of the radius only.
Write $v_i=D(p_i)$ and
$\delta_i=D(\lambda_i)/(2\lambda_i)$.
Differentiating the zeroth and first moments, and using
$\sum_i\lambda_i v_i=0$, gives
$\delta\in W^\perp=W$.
Set $u_i=v_i+\delta_i p_i$.
Differentiating the quadratic moments, and using symmetry of the
mixed moment matrix, gives
\[
 B(u^{(j)},p^{(k)})=0,
 \qquad B(u^{(j)},\mathbf1)=0.
\]
Thus every $u^{(j)}$ belongs to $W$ as well.
Since $p_i\cdot v_i=0$, we have $p_i\cdot u_i=\delta_i$ and hence
\[
 \sum_i\lambda_i\|v_i\|^2
 =\sum_i\lambda_i\|u_i-\delta_i p_i\|^2
 =\sum_{j=1}^dB(u^{(j)},u^{(j)})-B(\delta,\delta)=0.
\]

For the final assertion, discard zero weights and let $d$ be the
affine dimension of their support. The isotropic-space bound above
gives $2d+2\le m\le6$. A spherical set of affine dimension at most
one has at most two points and admits no nonzero quadratic moment
weights. Thus the only remaining case is $d=2$, $m=6$, which is the
equality case already settled.
\end{proof}

\subsection*{Generic sets}

\begin{theorem}\label{app:thm:generic_sphere}
For $d\ge3$, almost every set of
$m=\binom{d+2}{2}$ points on a fixed $(d-1)$-sphere is not Ramsey.
More precisely, this holds when all $m(d-1)$ stereographic
parameters are algebraically independent over $\Q$.
Among configurations with algebraically independent stereographic
parameters, this point count is the smallest possible for a
nonzero fixed weighted identity from $\|D(q)\|^2$.
\end{theorem}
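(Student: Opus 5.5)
The plan is to verify the hypotheses of Lemma~\ref{app:lem:criterion} directly, using a dimension count in place of the explicit interpolation of Theorem~\ref{app:thm:generic}. Normalize the sphere to the unit sphere $S^{d-1}\subset\R^d$. Let $\mathcal Q$ be the space of restrictions to $S^{d-1}$ of polynomials of degree at most two. Because $\|x\|^2=1$ is the only linear relation among $1,x_j,x_jx_k$ on the sphere, $\dim\mathcal Q=\binom{d+2}{2}-1=m-1$.

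\emph{Step 1: genericity of evaluation.} For any $k\le m-1$ points, I would show that evaluation $\mathcal Q\to\R^k$ at points with algebraically independent stereographic parameters is surjective. The relevant maximal minors are rational functions in the parameters with rational coefficients. Such a minor is not identically zero, because the functions in $\mathcal Q$ are linearly independent, so some configuration separates them. Algebraic independence then forces these minors to be nonzero at our configuration.

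Two consequences follow for the $m$ points $p_1,\dots,p_m$.
\begin{itemize}
\item The weights $\lambda$ annihilating $\mathcal Q$, which are exactly the three ordinary moment conditions, form a one-dimensional real space.
\item Each $\lambda_i\ne0$. Otherwise $\lambda$ would be a nonzero functional annihilating $\mathcal Q$ on $m-1$ points, contradicting surjectivity there.
\end{itemize}
The same surjectivity proves the minimality claim. If $m'<\binom{d+2}{2}$, the moment conditions, which are necessary by the remark preceding Theorem~\ref{app:thm:dimbarrier}, force all weights to vanish, so the energy is zero.

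\emph{Step 2: freedom of the derivation.} Since all $m(d-1)$ parameters are algebraically independent, any real values may be prescribed for $D$ on them; then extend to a transcendence basis of $\R$, and algebraically. The differential of the stereographic parametrization is an isomorphism onto the tangent space. Hence the vector $(v_i)=(D(p_i))$ can be any element of $T=\bigoplus_i T_{p_i}S^{d-1}$, a space of dimension $m(d-1)$.

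The remaining conditions of Lemma~\ref{app:lem:criterion} are linear in $v$:
\begin{itemize}
\item $\sum_i\lambda_iv_i=0$, giving $d$ equations;
\item skew-symmetric part of $\sum_i\lambda_iv_ip_i^{\mathsf T}$ equal to zero, giving $\binom d2$ equations.
\end{itemize}
They cut out a subspace $U\subseteq T$ with $\dim U\ge m(d-1)-\binom{d+1}{2}$.

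\emph{Step 3: nonzero energy (the key step).} I need some $v\in U$ with $Q(v)=\sum_i\lambda_i\|v_i\|^2\ne0$. The form $Q$ on $T$ is nondegenerate, since all $\lambda_i\ne0$. Its positive and negative indices are $(d-1)$ times the numbers of positive and negative weights. A totally $Q$-isotropic subspace therefore has dimension at most $\tfrac12 m(d-1)$. So it suffices that $m(d-1)-\binom{d+1}{2}>\tfrac12m(d-1)$, that is, $m(d-1)>d(d+1)$. With $m=\binom{d+2}{2}$ this reads $(d+2)(d-1)>2d$, or $d^2-d-2>0$, which holds exactly for $d\ge3$. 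This explains the hypothesis on $d$: for $d=2$ the inequality fails, consistent with Theorem~\ref{app:thm:barrier}.

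Thus $U$ is not totally isotropic. A non-totally-isotropic subspace contains a vector with $Q(v)\ne0$, either directly or via polarization. Realize this $v$ by a derivation as in Step 2, and Lemma~\ref{app:lem:criterion} shows the set is not Ramsey.

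For the measure statement, algebraic independence fails only on a countable union of proper algebraic subsets of parameter space, which is a null set. This transfers to the sphere's measure as in Theorem~\ref{app:thm:generic}. The main point needing care is Step 1, the nonvanishing of the generic minors together with $\lambda_i\ne0$. The isotropy count in Step 3 is then routine.
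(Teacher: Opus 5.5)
Your proposal is correct and follows essentially the same route as the paper. Both arguments obtain a one-dimensional space of nowhere-vanishing moment weights from generic independence of quadratic evaluations, impose $\binom{d+1}{2}$ linear conditions on the $m(d-1)$-dimensional tangent space so that $m(d-1)>d(d+1)$ forces a non-isotropic vector, realize it by a freely prescribed derivation, and prove minimality by the same evaluation-rank argument. Your explicit signature bound on isotropic subspaces, and your observation that derivation values may be arbitrary reals (so $v$ need not be chosen over the parameter field), are harmless refinements of the paper's argument.
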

\begin{proof}
By similarity we work on the unit sphere.
Restrictions of quadratic polynomials to this sphere form a space
of dimension $q=\binom{d+2}{2}-1$.
Their evaluation vectors at any $q$ of the given points are
independent: any failure would be a nonzero polynomial relation
among the stereographic parameters. (The relevant determinant is
not identically zero because the quadratic restrictions are linearly
independent functions.) Thus, with $m=q+1$, there are weights
$\lambda_i\ne0$ annihilating every quadratic polynomial.

Let $V=\bigoplus_{i=1}^m p_i^\perp$ be the space of tangent
velocities. Its dimension is $N=m(d-1)$, and
$H(v,w)=\sum_i\lambda_i v_i\cdot w_i$ is nondegenerate on $V$.
Require
\[
 \sum_i\lambda_i v_i=0,
 \qquad \sum_i\lambda_i v_i p_i^{\mathsf T}
       \text{ is symmetric}.
\]
These impose at most $h=d+\binom d2$ linear conditions.
Since $N>2h$ for $d\ge3$, their common kernel has dimension
greater than $N/2$. It cannot be totally isotropic for $H$.
Consequently it contains $v$ with $H(v,v)\ne0$.
All this linear algebra can be performed over the field generated
by the stereographic parameters, and $v$ can be chosen over that
field too.

The differential of stereographic parametrization identifies
parameter velocities with tangent velocities. Algebraic
independence therefore lets us prescribe a derivation with
$D(p_i)=v_i$ for every $i$. Lemma~\ref{app:lem:criterion}
now proves that the set is not Ramsey.
Algebraic independence holds almost everywhere, as before.

For $m\le q$, the evaluation columns are independent, so all
quadratic moment weights vanish. Necessity of the ordinary moments
then proves the claimed optimality for this form of identity.
\end{proof}

\subsection*{Explicitly attaining the dimension-dependent bound}

The lower bound $2d+3$ for a certificate with all weights nonzero is
attained by the following explicit constructions.  In dimension three
they give the integer-weight nine-point example directly.

For $|z|=1$, put
\[
 \Gamma_k(z)=\frac1{\sqrt{k}}
  (\mathop{\rm Re}z,\mathop{\rm Im}z,
   \mathop{\rm Re}z^2,\mathop{\rm Im}z^2,\ldots,
   \mathop{\rm Re}z^k,\mathop{\rm Im}z^k)\in\R^{2k}.
\]
Thus $\|\Gamma_k(z)\|=1$.  We shall repeatedly use the elementary
interpolation identity
\begin{equation}\label{app:eq:explicit-interpolation}
 \sum_{H(z)=0}\frac{z^r}{H'(z)}=0
 \qquad(0\le r\le \deg H-2),
\end{equation}
when $H$ has distinct roots.

\begin{theorem}\label{app:thm:sharp_dimension}
For every $d\ge2$, there is an explicit affinely $d$-dimensional
spherical set of $2d+3$ points admitting a nonzero fixed weighted
derivation-energy identity with all weights nonzero.  Consequently
the set is not Ramsey.  If $d\ge3$, the set may be chosen so that no
six of its points lie in an affine two-plane.
\end{theorem}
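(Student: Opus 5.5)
The plan is to place the points on a trigonometric moment curve and to reuse the partial-fraction device of Theorems~\ref{app:circle:raw} and~\ref{app:thm:one_parameter}, now in higher degree. The certificate is then fed to Lemma~\ref{app:lem:criterion}.

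\textbf{Parametrization.} First suppose $d=2k$ is even, and take points $\Gamma_k(z_i)$ for $2d+3$ distinct points $z_i$ of the unit circle. This set is spherical and affinely spans $\R^{2k}$, since $\Gamma_k$ is a full moment curve. Use the stereographic parameter $z=(1+\mathrm{i}t)/(1-\mathrm{i}t)$ with real $t_i$. Then every coordinate of $\Gamma_k$ is $h(t)/(1+t^2)^k$ with $\deg h\le 2k$. Hence every quadratic polynomial in the coordinates is $h(t)/(1+t^2)^{d}$ with $\deg h\le 2d$. Moreover $D(\Gamma_k(z_i))=v_i\,\Gamma_k'(t_i)$, where $v_i=D(t_i)$, and each component of $\Gamma_k'(t)$ is $g(t)/(1+t^2)^{k+1}$ with $\deg g\le 2k+1$.

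\textbf{Choice of weights and velocities.} Take $t_1,\dots,t_{2d+3}$ algebraically independent over $\Q$, and fix a real $z$ distinct from all $t_i$. With $H(T)=\prod_i(T-t_i)$ of degree $2d+3$, set
\[
 \lambda_i=\frac{(1+t_i^2)^d(t_i-z)}{H'(t_i)},\qquad D(t_i)=v_i=\frac{1+t_i^2}{t_i-z}.
\]
By algebraic independence a derivation with these values exists, as in Theorem~\ref{app:thm:generic}. All $\lambda_i$ are nonzero.

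\textbf{Verification of the moment conditions.}
\begin{itemize}
\item The zeroth, first and quadratic moments become sums $\sum_i h(t_i)(t_i-z)/H'(t_i)$ with $\deg h\le 2d$. The numerator has degree at most $2d+1=\deg H-2$, so these vanish by \eqref{app:eq:explicit-interpolation}.
\item The conditions $\sum_i\lambda_iD(p_i)=0$ and the symmetry of $\sum_i\lambda_iD(p_i)p_i^{\mathsf T}$ become sums $\sum_i P(t_i)/H'(t_i)$. Here the factors $(t_i-z)$ cancel and the powers of $1+t_i^2$ cancel exactly: $d+1$ from $\lambda_iv_i$ against $k+1+k$. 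The polynomial $P$ has degree at most $(2k+1)+2k=2d+1$, so these also vanish.
\item For the energy, $\|\Gamma_k'(t)\|^2=c/(1+t^2)^2$ with $c=4k^{-1}\sum_j j^2>0$. Therefore
\[
E=c\sum_i\frac{(1+t_i^2)^d}{(t_i-z)H'(t_i)}=-c\,\frac{(1+z^2)^d}{H(z)}\ne0
\]
by partial fractions, since $2d<\deg H$.
\end{itemize}
Lemma~\ref{app:lem:criterion} then gives non-Ramseyness. For the last claim, no six points lie in an affine two-plane because the points are in general position on a moment curve with independent parameters. Any such coplanarity would impose a nontrivial polynomial relation among the $t_i$.

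\textbf{Odd dimension (main obstacle).} For $d=2k+1$ I cannot simply drop a coordinate of $\Gamma_{k+1}$ without losing sphericity. The first attempt is to use the spherical curve
\[
\bigl(\alpha\,\Gamma_k(z),\ \beta\,\mathop{\rm Re}z^{k+1}\bigr)\quad\text{with }\alpha^2+\beta^2=1,
\]
and check the same degree count. The quadratic moments involve $(\mathop{\rm Re}z^{k+1})^2$, which has Laurent degree $2k+2=d+1$ and so needs one more order of vanishing. Compensating for that requires $\deg H\ge 2d+3$, which matches the count $2d+3$ points, but the weight factor must then be chosen carefully. If this fails, I would instead choose the weight numerator as a suitable linear factor, in the spirit of the exact cubic test, and solve the small remaining linear system.

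Making this odd-dimensional bookkeeping work, with all weights nonzero and the energy still a single partial-fraction term, is the step I expect to be hardest.
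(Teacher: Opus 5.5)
Your even-dimensional construction is correct and uses the paper's mechanism: weights of the form (linear factor)$\cdot$(normalising power)$/H'$, a one-pole velocity, and an energy that collapses to a single partial-fraction residue. It differs only by a M\"obius move of the pole. The paper puts the pole at $z=-1$, i.e.\ $t=\infty$, and realises $D(t)=-t(1+t^2)/2$ from the single transcendental $a=\pi-3$ with $D(a)=-a/2$. Your version needs $2d+3$ algebraically independent $t_i$, which you would still have to name (e.g.\ via Lindemann--Weierstrass) to meet the word ``explicit''.

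\textbf{The gap: odd $d$.} You give no construction for odd $d$, so the theorem is unproved for $d=3,5,\dots$. With it goes the no-six-coplanar clause in odd dimensions.

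Your candidate curve $(\alpha\Gamma_k(z),\beta\,\mathrm{Re}\,z^{k+1})$ is not spherical. With $z=e^{i\theta}$, its squared distance from any centre contains $\tfrac{\beta^2}{2}\cos(2(k+1)\theta)$. The cross term with the centre involves only harmonics of order at most $k+1$, so nothing cancels this unless $\beta=0$, and then the curve lies in a hyperplane.

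Generic points do not help either. For $d\ge3$ the quadratic polynomials restricted to $S^{d-1}$ span a space of dimension $\binom{d+2}{2}-1\ge 2d+3$, so $2d+3$ points in general position admit no nonzero moment weights. A genuinely special configuration is required, and the vague fallback of ``a suitable linear factor'' does not supply one.

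\textbf{The missing idea: two parallel layers.} In $\R^{2k}\times\R$ the paper uses two layers instead of one curve.
\begin{itemize}
\item \emph{Upper layer:} the points $(\rho\Gamma_k(\zeta_j),1)$ with $\zeta_j=e^{2\pi i j/(4k)}$ and weights $(-1)^j$. The root-of-unity identities kill its mass, first moment, derivative first moment and derivative energy. Its only surviving quadratic and mixed moments are the blocks $2\rho^2\,\mathrm{diag}(1,-1)$ and $2\rho\,\mathrm{diag}(1,-1)$ in the last harmonic coordinate pair.
\item \emph{Lower layer:} the five unit-circle points $(-1,0),(a,\pm\sqrt{1-a^2}),(c,\pm\sqrt{1-c^2})$ in that same pair, at height $0$. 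Here $c=(a-1)/2$, $\rho=a+1$, the weights are $-4,-2,-2,4,4$, and $D(a)=1$. This layer cancels both blocks exactly and contributes the nonzero energy $-4/((1-a)(3-a))$.
\end{itemize}
All $4k+5=2d+3$ points lie on the sphere centred at $(0,\dots,0,\rho^2/2)$. The coplanarity claim follows by counting how many points of each layer an affine two-plane can contain.
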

\begin{proof}
First let $d=2k$ be even.  Set $a=\pi-3$, and, for
$1\le j\le2k+1$, define
\[
 y_j=\frac{ja}{2k+2},\qquad
 t_j=\frac{y_j}{\sqrt{1-y_j^2}},\qquad
 z_j=\frac{1+it_j}{1-it_j}.
\]
The $4k+3$ numbers in
\[
             \mathcal Z=\{1\}\cup\{z_j,\overline{z_j}:1\le j\le2k+1\}
\]
are distinct points of the unit circle.  Let
\[
 H(Z)=\prod_{z\in\mathcal Z}(Z-z),\qquad
 \lambda_z=\frac{(z+1)z^{2k}}{H'(z)},
\]
and take the points $\Gamma_k(z)$, $z\in\mathcal Z$.  The product of
the roots of $H$ is one.  Since $\deg H=4k+3$ is odd, direct
conjugation gives
\[
 \overline{H'(z)}=\frac{H'(z)}{z^{4k+1}},
 \qquad \overline{\lambda_z}=\lambda_z.
\]
Thus the weights are real; they are nonzero because $-1\notin\mathcal Z$.

Since $a$ is transcendental, choose a derivation with $D(a)=-a/2$.
If $z=(1+it)/(1-it)$ is one
of the roots above (put $t=0$ for $z=1$), then
\[
 D(t)=-\frac{t(1+t^2)}2,\qquad D(z)=-it z.
\]
Write $b_z=-t$, so that $D(z)=ib_z z$.  Identity
\eqref{app:eq:explicit-interpolation} gives, for $|\ell|\le2k$,
\begin{equation}\label{app:eq:explicit-modes}
 \sum_{z\in\mathcal Z}\lambda_z z^\ell=0,
 \qquad
 \sum_{z\in\mathcal Z}\lambda_z b_z z^\ell=0.
\end{equation}
Indeed, after inserting the definitions, the two sums involve only
the powers $z^{2k+\ell},z^{2k+\ell+1}$, whose exponents lie between
$0$ and $4k+1$.

Every coordinate of $\Gamma_k(z)$ is a linear combination of
$z^r,z^{-r}$ with $1\le r\le k$, and every product of two coordinates
uses only modes of absolute value at most $2k$.  Hence
\eqref{app:eq:explicit-modes} gives all the moment conditions in
Lemma~\ref{app:lem:criterion}; in fact the mixed moment matrix is zero.
Moreover
\[
 \|D\Gamma_k(z)\|^2
   =\frac1k\sum_{r=1}^k r^2 b_z^2.
\]
The corresponding weighted energy does not vanish, because
\begin{align*}
 \sum_{z\in\mathcal Z}\lambda_z b_z^2
 &=\sum_{H(z)=0}
   \frac{-(z-1)^2z^{2k}}{(z+1)H'(z)}
   =\frac4{H(-1)}\ne0.
\end{align*}
For the last equality, divide the numerator by $z+1$, use
\eqref{app:eq:explicit-interpolation} on the polynomial quotient,
and use
$\sum_{H(z)=0}((z+1)H'(z))^{-1}=-1/H(-1)$.
Lemma~\ref{app:lem:criterion} now proves non-Ramseyness.

Now let $d=2k+1$ be odd.  Again put $a=\pi-3$, and set
\[
             c=\frac{a-1}{2},\qquad \rho=a+1,
             \qquad \zeta_j=e^{2\pi i j/(4k)}.
\]
In $\R^{2k}\times\R$, take the following two horizontal layers.  The
upper layer consists of
\[
        (\rho\Gamma_k(\zeta_j),1),\qquad 0\le j<4k,
\]
with weight $(-1)^j$.  In the last coordinate pair of $\R^{2k}$, the
lower layer consists of
\[
 (-1,0),\quad (a,\pm\sqrt{1-a^2}),\quad
 (c,\pm\sqrt{1-c^2}),
\]
with respective weights $-4,-2,-2,4,4$; all its other coordinates
and its final height are zero.  These $4k+5=2d+3$ points lie on the
sphere with centre $(0,\ldots,0,\rho^2/2)$ and squared radius
$1+\rho^4/4$.

Choose $D(a)=1$.  The root-of-unity identity
\[
 \sum_{j=0}^{4k-1}(-1)^j\zeta_j^\ell=0
       \quad (|\ell|\le2k-1),
 \qquad
 \sum_{j=0}^{4k-1}(-1)^j\zeta_j^{\pm2k}=4k
\]
shows that the upper layer has zero weighted mass and first moment,
while its only nonzero quadratic moments form the block
\[
                    2\rho^2\begin{pmatrix}1&0\\0&-1\end{pmatrix}
\]
in the last harmonic coordinate pair.  Its derivative first moment
vanishes, its mixed moment has the same block with $2\rho$ in place
of $2\rho^2$, and its derivative energy is zero.

The lower layer has zero weighted mass, first moment, and derivative
first moment.  Directly using $D(\sqrt{1-u^2})=-uD(u)/\sqrt{1-u^2}$,
its quadratic and mixed moments are the negatives of the two blocks
above.  Its derivative energy is
\[
 -\frac4{1-a^2}+\frac2{1-c^2}
       =-\frac4{(1-a)(3-a)}\ne0.
\]
Thus all the moment conditions hold and the total energy is nonzero;
Lemma~\ref{app:lem:criterion} again applies.  For $k=1$ this is the
explicit integer-weight nine-point construction.

It remains only to record the geometric assertions. A nonzero real
trigonometric polynomial of degree at most $k$ has at most $2k$ zeros
on the circle (multiply its Laurent form by $z^k$ and use the ordinary
polynomial root bound). Equivalently, every at most $2k+1$ distinct points of
$\Gamma_k$ are affinely independent.  This proves affine spanning in
both constructions, and shows that in even dimension $d\ge4$ no four
of the displayed points are coplanar.  In odd dimension, a two-plane
contained in the upper horizontal hyperplane contains at most four
upper points, while a two-plane contained in the lower horizontal
hyperplane contains at most the five points of the lower circle.  A
two-plane contained in neither horizontal hyperplane meets each of
them in at most a line, hence contains at most two points from each
layer.  Thus no six points are coplanar when $d\ge3$.
\end{proof}

\section{Quantitative colour bounds}

The explicit seven-point configuration in Theorem~\ref{thm:main} can
be avoided using twelve colours in every dimension. Indeed, using its
function $F$, colour $q$ by
\[
                         \left\lfloor F(q)/4\right\rfloor\pmod {12}.
\]
The positive and negative weights both have total absolute weight $6$.
If a copy were monochromatic, write $F(p')/4=12k_p+c+\theta_p$, with
$k_p\in\mathbb Z$ and $0\leq\theta_p<1$. Its invariant identity would
give
\[
 -6=12\sum_p\lambda_pk_p+\sum_p\lambda_p\theta_p,
 \qquad -6<\sum_p\lambda_p\theta_p<6,
\]
which is impossible. More generally, integral weights summing to zero,
with positive total $L$, and any nonzero fixed invariant give an
avoiding colouring with $2L$ colours by exactly this argument:
if the invariant is $\sum_p\lambda_pF(p')=E\ne0$, use
$\lfloor LF(q)/E\rfloor\bmod 2L$.
By the compactness theorem for finite hypergraph colourings, the same
finite bound holds on any real inner-product space: every finite collection of forbidden copies lies in a
finite-dimensional Euclidean subspace.

\subsection*{An eight-colour family}

\begin{theorem}\label{app:thm:eight_colours}
There are seven-point subsets of the unit circle that can be avoided
with eight colours in every dimension.
\end{theorem}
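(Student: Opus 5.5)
The plan is to reduce the statement to finding a single seven-point certificate with \emph{integer} weights whose positive part totals $L=4$. The colour-count paragraph just before the statement then turns any nonzero fixed invariant $\sum_p\lambda_pF(p')=E$ into the colouring $\lfloor 4F(q)/E\rfloor\bmod 8$ with $2L=8$ colours. Compactness extends it to every dimension, and Lemma~\ref{app:lem:criterion} supplies the invariant. So everything rests on a circle certificate \eqref{app:eq:angular} with $\lambda\in\mathbb Z^7$, $\sum\lambda_i=0$ and $\sum_{\lambda_i>0}\lambda_i=4$, for instance $\lambda=(1,1,1,1,-1,-1,-2)$.

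I would first rule out the symmetric design of Theorem~\ref{thm:main}, with one point on the axis and three mirror pairs. Its abscissa measure has four alternating-sign atoms with vanishing moments of order $0,1,2$. With total absolute weight $8$, the only candidate is $(2,-2,2,-2)$, which forces $ac=0$, so this design fails. I would therefore drop the reflection symmetry and fix the integer weight vector $\lambda$ in advance. I then look at the real variety of unit-circle configurations $z_1,\dots,z_7$ with $\sum_i\lambda_iz_i=0$ and $\sum_i\lambda_iz_i^2=0$ in complex notation. Conjugation handles the $\bar z$ modes, and $\sum_i\lambda_i=0$ holds automatically. These are four real equations on seven angles, so generically the variety is a three-dimensional family, including the rotation orbit.

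The key observation is that differentiation along this family produces the derivative conditions for free. Pick a smooth point of the family at which local coordinates can be taken to be transcendental and suitably independent, and pull back a derivation along a tangent direction; because $D(\lambda_i)=0$, writing $D(z_i)=ib_iz_i$ and differentiating the two moment equations gives $\sum_i\lambda_ib_iz_i=\sum_i\lambda_ib_iz_i^2=0$. By \eqref{app:eq:angular}, these are exactly $B(b,x)=B(b,y)=0$, and the quadratic moments already hold. The only remaining linear condition is $B(b,\mathbf1)=\sum_i\lambda_ib_i=0$, which is a single linear functional on the three-dimensional tangent space, so its kernel has dimension at least two. The rotation direction $b=\mathbf1$ lies in this kernel but has energy $B(\mathbf1,\mathbf1)=0$. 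I therefore need a second kernel vector $b$ with $B(b,b)=\sum_i\lambda_ib_i^2\ne0$. Realizability of $b$ as $(D(\cdot))$ follows as in Theorem~\ref{app:thm:cubic_test}: define $D$ on the field generated by transcendence-basis parameters of the family, then extend to $\R$.

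I expect the main obstacle to be exhibiting one explicit point of the family where the family is smooth of dimension three and the restricted form $B$ on $\ker B(\cdot,\mathbf1)\cap T$ is not totally isotropic. I would try a concrete, nearly symmetric configuration for $\lambda=(1,1,1,1,-1,-1,-2)$, say with the $-2$ point at angle $0$, and solve the moment equations numerically, or exactly with two free angles. I would then compute the $2\times2$ Gram matrix of $B$ on the kernel modulo $\mathbf1$. Nonvanishing is an open condition, so a single verified point, perturbed to a transcendental point of the same component, suffices. If this weight vector fails, I would fall back on $(2,1,1,-1,-1,-1,-1)$ and rerun the same computation.
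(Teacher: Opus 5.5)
\textbf{Gap: the configuration is never shown to exist.} Your reduction (integer weights with positive total $L=4$, then the colouring $\lfloor 4F(q)/E\rfloor\bmod 8$) is sound, and so is the tangent-space mechanism. Together they give a route genuinely different from the paper's. The paper writes down an explicit one-parameter family: the roots of $f_+$, with $1$ counted twice, against the four roots of $f_-$, whose $z^3$ and $z^2$ coefficients agree. It takes $D(t)=1$ and gets a nonzero energy because a rational function of $t$ tends to $-8$ as $t\to0$.

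Your argument, however, stops exactly where the work lies. You never produce a real configuration of seven \emph{distinct} unit points with $\sum_i\lambda_iz_i=\sum_i\lambda_iz_i^2=0$. Without one, the count ``four equations in seven angles'' proves nothing: a priori the real solutions could all be degenerate, with coincident points, as in the paper's family at $t=0$.

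Nor do you have a fallback. $(2,1,1,-1,-1,-1,-1)$ is $-(1,1,1,1,-1,-1,-2)$ reordered, and this choice is forced: seven nonzero integers with sum $0$ and positive total $4$ must be four $+1$'s against $\{-1,-1,-2\}$, or the negative. So an existence argument, such as the paper's $f_\pm$ with $0<|t|<1/10$, is indispensable.

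\textbf{The nondegeneracy is automatic.} The step you planned to test numerically needs no computation, so once one such configuration is available your argument closes.
Put $\mu_k=\sum_i\lambda_iz_i^k$. Then $\mu_k=0$ for $|k|\le2$, while $\mu_3\ne0$: otherwise $\lambda$ would annihilate $z^{-3},\ldots,z^3$ at seven distinct points, contradicting Vandermonde.

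Let $U$ be the five-dimensional span of $\mathbf1,\operatorname{Re}z,\operatorname{Im}z,\operatorname{Re}z^2,\operatorname{Im}z^2$. Your kernel $T\cap\ker B(\cdot,\mathbf1)$ is exactly $U^{\perp_B}$, of dimension $7-5=2$. So modulo $\mathbf1$ the Gram matrix is $1\times1$, not $2\times2$.

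The radical of $B$ on $U^{\perp_B}$ is $U\cap U^{\perp_B}$, which is the radical of $B|_U$. Modulo $\mathbf1$, $B|_U$ splits into the block $\begin{pmatrix}0&\mu_3\\ \mu_3&\mu_4\end{pmatrix}$ on $z,z^2$ and its conjugate on $\bar z,\bar z^2$. The total determinant is $|\mu_3|^4\ne0$. Hence the radical is $\R\mathbf1$, and some $b\in U^{\perp_B}$ has $B(b,b)\ne0$.

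The four real moment functionals are independent because $B$ is nondegenerate. So every such configuration is a smooth point of a three-dimensional family. At any point of transcendence degree three over $\Q$, derivations realize the whole tangent space.

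\textbf{Comparison with the paper.} Completed this way, your route covers every configuration of transcendence degree three in the family, with no energy computation. The paper's curve has transcendence degree one, so only a single derivation direction is available there. That is why the paper must separately verify $\sum_j\lambda_jb_j=0$ (from the ratio of constant coefficients) and the nonzero energy (by the limit).
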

\begin{proof}
The following family has weights
$(2,1,1,-1,-1,-1,-1)$, the smallest possible total absolute weight for
seven nonzero integer weights whose sum is zero.

Identify the plane with $\mathbb C$. For any transcendental real number
$t$ with $0<|t|<1/10$ (for example $t=\pi/100$), put
\[
 u=\frac{1+it}{1-it},\quad
 \alpha=\frac{3-u^2}{2},\quad \beta=\frac{3u^2-1}{2},
\]
and consider
\[
 \begin{aligned}
 f_+(z)&=(z-1)^2\left(z^2+\frac{1+u^2}{2}z+u^2\right)
          =z^4-\alpha z^3-\beta z+u^2,\\
 f_-(z)&=z^4-\alpha z^3+\beta z-u^2.
 \end{aligned}
\]
The three distinct roots of $f_+$ and four roots of $f_-$ are the
desired points; the root $1$ has weight $2$, the other roots of $f_+$
have weight $1$, and the roots of $f_-$ have weight $-1$.

We first check the geometric assertions. Since $|u|=1$, writing $z=uw$
in the quadratic factor of $f_+$ gives
\[
                       w^2+\operatorname{Re}(u)w+1=0;
\]
The two roots are distinct, lie on the unit circle, and differ from $1$
in the stated interval. Substituting $z=(1+ix)/(1-ix)$ in $f_-(z)=0$
gives the real quartic equation
\[
 h_t(x)=t(1+6x^2-3x^4)+(1-t^2)x(1-3x^2)=0.
\]
For $0<t<1/10$, its signs at $-\infty,-1,-1/3,0,1$ are respectively
$-,+,-,+,-$. Hence it has four distinct real roots, giving four
distinct unit roots of $f_-$. Negative $t$ follows by conjugation.
If a unit root were shared by $f_+$ and $f_-$, subtraction would
give
\[
                 z=\frac{2u^2}{3u^2-1},
\]
whose modulus is $1$ only if $u^2=1$. Hence the seven points are distinct
throughout the stated interval.

Choose a real derivation with $D(t)=1$ and extend it to $\mathbb C$
by $D(x+iy)=D(x)+iD(y)$. The equal coefficients of $z^3$ and $z^2$ in $f_+$ and
$f_-$ give equality of the first two power sums of their root
multisets. Consequently the weights annihilate the constant, linear,
and quadratic coordinate functions. The weighted product of their
roots is $-1$, because their constant coefficients are $u^2$ and
$-u^2$. For a unit root $z_j$, write $D(z_j)=ib_jz_j$ with $b_j$ real.
Differentiating the product ratio gives
\[
                            \sum_j\lambda_jb_j=0.
\]
Differentiating the weighted first coordinate moments gives
$\sum_j\lambda_jD(p_j)=0$. Since the weights are integers,
differentiating the quadratic moments shows that
$M=\sum_j\lambda_jD(p_j)p_j^{\mathsf T}$ is skew-symmetric.
The preceding angular identity makes its skew-symmetric part zero,
so $M=0$. Thus all moment conditions of the invariant criterion hold.

It remains to check that the energy is nonzero. Let $z_j(t)$ denote
the algebraic root branches and put
$E(t)=\sum_j\lambda_j|z_j'(t)|^2$. At a transcendental parameter $t$,
implicit differentiation gives $D(z_j(t))=z_j'(t)$, so $E(t)$ is the
required invariant energy. The root $1$ of $f_+$ is constant.
At $t=0$, the ordinary angular velocities of its other two roots
are $2,2$, while the angular velocities of the four roots of
$f_-$, ordered as $1,-1,e^{i\pi/3},e^{-i\pi/3}$, are
$-2,2,2,2$. These values follow directly by implicit differentiation;
for example $u'(0)=2i$. Therefore
\[
 \lim_{t\to0}E(t)
                        =2\cdot2^2-4\cdot2^2=-8.
\]
In fact $E(t)$ is a rational function over $\Q(i)$: for each simple
unit root, $|z_j'|^2=-(z_j'/z_j)^2$, and implicit differentiation
expresses $z_j'$ rationally in $t,z_j$. The sum over the roots of
each polynomial is symmetric; for $f_+$ use only its quadratic factor,
since the doubled root $1$ contributes zero. The limit $-8$ shows
that this rational function is not identically zero. It therefore
cannot vanish at a transcendental $t$, and simplicity of the roots
excludes poles on the stated interval. Lemma~\ref{app:lem:criterion}
proves non-Ramseyness, and the preceding
colour bound with $L=4$ gives eight colours.
\end{proof}

\subsection*{A common colouring for many parameters}

Let $S\subset(2,\infty)$ be algebraically independent over $\Q$.
For every dimension there is a single twelve-colouring avoiding all
the configurations $P_r$ from Theorem~\ref{thm:main}, simultaneously
for $r\in S$. Extend $S$ to a transcendence basis and prescribe
\[
                 D(r)=\sqrt{(2r-1)(2r-3)(2r-4)}\quad(r\in S).
\]
The calculation in the main proof now gives the same invariant $-24$
for every $P_r$ with the unscaled function $F(q)=\|D(q)\|^2$.
Consequently $\lfloor F(q)/4\rfloor\bmod12$ avoids them all.
The set $S$ may be chosen dense in $(2,\infty)$ and of cardinality
$|\R|$: choose a countable algebraically independent dense subset,
extend it to a transcendence basis, and move each additional basis
element into $(2,\infty)$ by adding a suitable rational number.

\newpage
\section{Arbitrary quadratic jets in one derivation}

Higher derivatives do not evade the six-point obstruction, even when
their quadratic combination is indefinite.

\begin{theorem}\label{app:thm:all-jets}
Let $P$ be a spherical set of at most six points, let $D$ be a
derivation, and let $(c_{ab})_{0\le a,b\le k}$ be a real symmetric matrix.
Put
\[
 F_n(q)=\sum_{j=1}^n\sum_{a,b=0}^k
                 c_{ab}D^a(q_j)D^b(q_j),\qquad D^0(q_j)=q_j.
\]
If fixed real weights $\lambda_p$ satisfy
$\sum_p\lambda_pF_n(p')=K$ for every congruent copy of $P$ in every
ambient dimension, then $K=0$.
\end{theorem}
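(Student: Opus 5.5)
The plan is to reduce the statement to the isotropic-subspace argument of Theorems~\ref{app:thm:barrier} and~\ref{app:thm:dimbarrier}, working with the whole jet $(p,Dp,\ldots,D^kp)$ in place of the single velocity $Dp$. Assume $D\ne0$ and that some $c_{ab}$ with $\max(a,b)\ge1$ is nonzero, since otherwise $F_n$ is a multiple of $\|q\|^2$. Two observations drive everything.

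\emph{Free parameter.} Fix $u$ with $D(u)\ne0$ and replace it by $u+m$ with $m\in\Q$. This leaves every $D^a(u)$, $a\ge1$, unchanged, but changes $c=(1-u^2)/(1+u^2)$ and $s=2u/(1+u^2)$ as rational functions of $m$. By the Leibniz rule,
\[
 D^a(cx)=\sum_i\binom ai D^{a-i}(c)\,D^i(x),
\]
so the difference between the values of $\sum_p\lambda_pF_n$ on a tilted copy $(x,y)\mapsto(cx,y,sx)$ and on the flat copy is
\[
 \sum_{i,j}\Phi_{ij}(m)\,S_{ij},\qquad S_{ij}=\sum_p\lambda_pD^i(x_p)D^j(x_p),
\]
where the coefficients $\Phi_{ij}$ are explicit rational functions of $m$ built from $c_{ab}$ and the jets of $u$. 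Since this difference vanishes for infinitely many $m$, it vanishes identically as a rational function, and we may compare coefficients.

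\emph{Rational isometries.} Isometries with rational entries commute with every $D^a$ and preserve each $\langle D^aq,D^bq\rangle$. They therefore give no information, and all the information comes from the free parameter.

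The steps, in order, are as follows.

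Step 1 recovers the ordinary moment conditions \eqref{app:eq:necessary}. The lowest-order part of the $m$-expansion isolates $S_{00}$ multiplied by a nonzero function, once we choose $c_{ab}$ appropriately, namely the top index pair with $c_{ab}\ne0$, for which the coefficient is a nonzero power of $D(u)$ times a nonconstant function of $m$. Applying this to all translates by rational vectors and to the three directions $x$, $y$, $(x+y)/\sqrt2$ gives \eqref{app:eq:necessary}, exactly as in Theorem~\ref{app:thm:barrier}. The quadratic-polynomial argument there then forces all weights to vanish when there are at most five points. For six points all weights are nonzero, so in the notation of Theorem~\ref{app:thm:dimbarrier} (with $d=2$, and analogously when the six points span $\R^3$ or more, where the same bound $m\ge2d+2$ applies) we get $W=W^\perp$ for the nondegenerate form $B$.

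Step 2 proceeds by induction on the order $a$. Using translations by $u e_j$ and rotations with entries $c,s$ (with $u$ replaced by $u+m$), I show that for every $a\le k$ the coordinate vectors of a corrected jet $u^{(a)}=D^ap+(\text{lower-order corrections})$ lie in $W$. For $a=1$ this is the corrected velocity $u_i=v_i+\delta_ip_i$ from Theorem~\ref{app:thm:dimbarrier}. The corrections come from differentiating the moment identities $a$ times. Since the weights need not be annihilated by $D$, this produces the terms involving $D^a(\lambda_i)$, and each such term lands in $W^\perp=W$ because of the previous orders.

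Step 3 concludes. After centring the sphere and normalizing the radius (which only rescales, and adds jets of $\rho$ multiplying $\sum\lambda_i$-type sums that already vanish), $\sum_p\lambda_pF_n(p)$ becomes a combination of values $B(\xi,\eta)$ with $\xi,\eta\in W$. Every such value is zero because $W$ is totally isotropic, and hence $K=0$.

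I expect the main obstacle to be Step 2. There the mixed terms $c_{ab}$ with $a\ne b$ and the binomial cross terms couple different orders. To make the coefficient extraction in $m$ triangular, I would first try ordering the pairs $(i,j)$ lexicographically by the top order and using the fact that $D^{a}(c)$ has a leading part $\bigl(D(u)\bigr)^a\,\partial_u^ac$ whose $m$-dependence is linearly independent from that of lower products.
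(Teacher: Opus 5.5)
Your Step~2 is where the proof has to happen, and the mechanism you propose does not deliver it. Differentiating the moment identities does not give linear conditions that place a corrected $D^ap$ in $W$. Differentiating $\sum_i\lambda_ip_ip_i^{\mathsf T}=0$ twice already brings in $\sum_i\lambda_i\,Dp_i\,Dp_i^{\mathsf T}$, which is quadratic in the velocities and is not controlled by the isotropy of $W$. In fact the conclusion is not linear-algebraic at all. Take a centred unit circle with $z_i=x_i+iy_i$, $Dz_i=ib_iz_i$, and $b=\alpha+\beta x+\gamma y\in W$; this is all the order-one analysis gives. Then $D^2z_i=(iDb_i-b_i^2)z_i$, so for $F=\|D^2q\|^2$ the energy is $\sum_i\lambda_i\bigl((Db_i)^2+b_i^4\bigr)$. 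This is a quartic moment, and its vanishing does not follow from $b\in W$.

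The missing idea is the extra constraint that forces $\beta=\gamma=0$. Start from $\sum_i\lambda_iD^2p_i=0$ and write $b_i=\alpha+az_i+\overline a z_i^{-1}$. The only term of $D^2z_i$ of degree above two is $-2a^2z_i^3$, so $a^2\sum_i\lambda_iz_i^3=0$. Moreover $\sum_i\lambda_iz_i^3\ne0$, since otherwise the weights would annihilate $z^{-3},\dots,z^3$, which Vandermonde rules out at six distinct points. Hence $a=0$ and all angular velocities coincide. Then $D^jp_i=A_jw_i+B_jJw_i$ with coefficients independent of $i$, so every jet inner product is constant over the six points and the weighted sum is zero.

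For this you also need every vector moment $Z_j=\sum_p\lambda_pD^jp$ to vanish. Translations only give the row equations $\sum_bc_{ab}Z_b=0$; for $\|D^2q\|^2$ that is just $Z_2=0$. The paper obtains $Z_0=\cdots=Z_k=0$ by re-applying these row equations to formal rotation jets $\exp(st^rT/r!)$ for $r=k,k-1,\ldots,1$.

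Step~1 is also false as stated. Isometric linear embeddings preserve $\|q\|^2$, so tilts and rotations cannot see the symbols $(U+V)^N$, that is, the terms $D^N\|q\|^2$. For $F=D\|q\|^2=2\langle q,Dq\rangle$ your top pair is $(0,1)$. Yet the coefficient of $S_{00}$ is $2\bigl(cD(c)+sD(s)\bigr)=D(c^2+s^2)=0$, and the whole tilt difference vanishes, so no quadratic moment condition can be extracted. You must first subtract multiples of $(U+V)^N$ degree by degree. If nothing remains, the weighted value is constant on a centred sphere and $K=0$. Otherwise, work with the top remaining homogeneous part $h$, normalized by $h(U,0)=h(0,V)=0$. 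That $h$ is what drives the paper's $\varepsilon\delta$-coefficient computation, which gives both $\sum_p\lambda_ppp^{\mathsf T}=0$ and the symmetry of $\sum_p\lambda_pD(p)p^{\mathsf T}$.

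Finally, the single shift $u\mapsto u+m$ moves the jets only along a one-dimensional rational curve, so your comparison of coefficients is not justified in general. The paper instead uses $\det\bigl(D^a(u^b)\bigr)\ne0$ to show that actual jets are dense among all formal jets. That density is what makes extraction in independent jet variables legitimate.
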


\begin{proof}
The zero quadratic form is immediate. After deleting zero rows and columns,
let $k$ be the largest index occurring in the coefficient matrix. If $k=0$
(which also covers $D=0$ after discarding the positive-order terms), then
translation invariance of $c_{00}\sum_p\lambda_p\|p+t\|^2$ forces
$\sum_p\lambda_p=0$ and $\sum_p\lambda_pp=0$. On a centred spherical copy
its value is therefore zero. Hence assume $D\ne0$ and $k\ge1$.
Choose a centred copy whose affine span is a coordinate subspace;
all its derivative vectors remain in that subspace.
We shall use arbitrary finite formal jets of translations and rotations.
This is legitimate: choosing $u$ with $D(u)\ne0$, the matrix
$(D^a(u^b))_{0\le a,b\le m}$ has determinant
$\bigl(\prod_{b=0}^m b!\bigr)D(u)^{m(m+1)/2}$.
Rational combinations of its columns are dense in $\R^{m+1}$.
Thus actual scalar jets are dense among all jets; applying this
entrywise to skew matrices and using the Cayley chart gives the same
statement for orthogonal jets. Polynomial identities therefore extend
to the formal jets used below.

Write $Z_j=\sum_p\lambda_pD^jp$. The quadratic translation terms
give $\sum_p\lambda_p=0$. The linear terms give
$\sum_b c_{ab}Z_b=0$ for every $a$, also after every formal rotation.
Choose $a$ with $c_{ak}\ne0$ and insert
$A(t)=\exp(st^rT/r!)$, where $T$ is skew. The coefficient of $s$ is
\[
       \sum_{b\ge r}\binom br c_{ab}T Z_{b-r}=0.
\]
Taking $r=k,k-1,\ldots,1$ successively, and then using the original
row equation, proves
\begin{equation}\label{app:eq:all-jet-vector-moments}
                     Z_0=Z_1=\cdots=Z_k=0.
\end{equation}

We next extract the two needed rotational conditions. Use the symbol
$c(U,V)=\sum c_{ab}U^aV^b$.
The symbol $(U+V)^m$ represents $D^m\|q\|^2$, so contributes nothing
to a rotational variation. Subtract such symbols, degree by degree.
If nothing remains, the value at every point of the centred sphere is
the same, and $K=0$. Otherwise, let $h$ be the highest nonzero homogeneous
part remaining, of degree $N$. Subtracting a further multiple of
$(U+V)^N$, we may assume
\[
            h(U,0)=h(0,V)=0,\qquad h\ne0.
\]
In particular $N\ge2$.

Let $E$ be the coordinate span of the centred configuration, let $e$ be a
unit normal, and, for $v,w\in E$, define skew maps by
\[
 Sx=e(v\cdot x)-(e\cdot x)v,\qquad
 Tx=e(w\cdot x)-(e\cdot x)w.
\]
Thus, writing $f_p=v\cdot p$ and $g_p=w\cdot p$, we have
$Sp=ef_p$, $Tp=eg_p$, and $(ST+TS)p=-vg_p-wf_p$.
Choose formal scalar jets $\theta^{(j)}(0)=x^j$ and
$\eta^{(j)}(0)=y^j$. To mixed order in $\varepsilon,\delta$, the
orthogonal jet is
\[
 A=I+\varepsilon\theta S+\delta\eta T
   +\tfrac12\varepsilon\delta\theta\eta(ST+TS)
   +O(\varepsilon^2,\delta^2).
\]
This is a finite formal expansion, realizable through the Cayley chart.
Put $B(r,s)=\sum_p\lambda_pr_ps_p$. The product of the two first
variations contributes $2c(U+x,V+y)B(f,g)$, while the two pairings with
the mixed variation contribute
$-c(U+x+y,V)B(f,g)-c(U,V+x+y)B(f,g)$. Hence the
$\varepsilon\delta$ coefficient is the symbol
\[
 2c(U+x,V+y)-c(U+x+y,V)-c(U,V+x+y)
\]
applied to $B(f^{(a)},g^{(b)})$.

The degree-$N$ part in $x,y$ is $2h(x,y)B(f,g)$, because
$h(U,0)=h(0,V)=0$; hence $\sum_p\lambda_ppp^{\mathsf T}=0$.
For degree $N-1$, let
$a=[U^{N-1}V]h=[UV^{N-1}]h$. Taylor expansion gives, up to multiples
of the already-zero $B(f,g)$,
\[
 \bigl(2h_U-a(x+y)^{N-1}\bigr)B(f',g)
 +\bigl(2h_V-a(x+y)^{N-1}\bigr)B(f,g')=0.
\]
Lower homogeneous components contribute at this degree only multiples of
$B(f,g)$. Interchanging $x,y$ and subtracting, using the symmetry of $h$,
gives
\[
 2(h_U-h_V)(x,y)\bigl(B(f',g)-B(f,g')\bigr)=0.
\]
Now $h_U-h_V\ne0$, since otherwise $h$ would be a multiple of
$(U+V)^N$, already removed. As the last parenthesis is
$v^{\mathsf T}(M-M^{\mathsf T})w$ for
$M=\sum_p\lambda_pD(p)p^{\mathsf T}$, this proves that $M$ is symmetric.
Consequently
\begin{equation}\label{app:eq:all-jet-rotation-moments}
 \sum_p\lambda_ppp^{\mathsf T}=0,\qquad
                  \sum_p\lambda_pD(p)p^{\mathsf T}\text{ is symmetric}.
\end{equation}

Delete zero weights, and let $m$ and $d$ be the size and affine dimension
of the remaining support. The space spanned by the constant vector and
the $d$ coordinate vectors is totally isotropic for the nondegenerate
form $B(a,b)=\sum_p\lambda_pa_pb_p$. Hence $d+1\le m/2\le3$.
An affine line meets a sphere in at most two points, so the only
nontrivial possibility is a circle. For any set of at most five distinct
circle points, the evaluation functionals on quadratic polynomials are
independent: separate any one point with a product of two chord equations.
We are therefore left
with six circle points, all weights nonzero.

Place a centred copy of the circle in the first two coordinate directions
and write $p_i=\rho w_i$,
$w_i=(x_i,y_i)$, $\|w_i\|=1$, and $Dw_i=b_iJw_i$.
The space $W=\operatorname{span}(\mathbf1,x,y)$ is now three-dimensional
and totally isotropic in $\R^6$, so $W=W^\perp$.
The first vector moment in \eqref{app:eq:all-jet-vector-moments} and
the symmetry in \eqref{app:eq:all-jet-rotation-moments} say that
$b\in W^\perp$. Thus
\[
                         b_i=\alpha+\beta x_i+\gamma y_i.
\]
For $k=1$, this already gives $B(b,b)=0$, and the weighted sums of
$\|p\|^2$, $\langle p,Dp\rangle$, and $\|Dp\|^2$ all vanish.

Suppose $k\ge2$. The second vector moment also gives
$\sum_i\lambda_iD^2w_i=0$.
Set $z_i=x_i+iy_i$ and $a=(\beta-i\gamma)/2$, extending $D$ by
$D(x+iy)=Dx+iDy$. Then
\[
 Dz_i=ib_iz_i,\qquad b_i=\alpha+az_i+\overline a z_i^{-1}.
\]
In $D^2z_i=(iDb_i-b_i^2)z_i$, the only term of degree higher than two
in $z_i$ is $-2a^2z_i^3$. All remaining terms vanish in the weighted
sum by the quadratic moments. Consequently
\[
                         0=-2a^2\sum_i\lambda_i z_i^3.
\]
The last sum is nonzero: otherwise its conjugate also vanishes, and
the weights annihilate all Laurent monomials of degrees $-3$ through
$3$. Their evaluation matrix at six distinct nonzero points has rank
six by the Vandermonde determinant. Hence $a=0$, so all $b_i$ are equal.
Repeated differentiation now gives
$D^jp_i=A_jw_i+B_jJw_i$, with coefficients independent of $i$.
Every jet inner product is therefore constant across the six points,
and its weighted sum is zero. Thus $K=0$.
\end{proof}

This concerns fixed weighted quadratic identities in one derivation,
not arbitrary finite colourings. In particular, it does not establish
the Ramsey property of any cyclic quadrilateral.

\section{A nonlinear finite-jet barrier}

The preceding quadratic barrier does not become weaker if one merely takes
higher powers of a highest-order jet.  The following observation covers a
fairly broad class of polynomial attempts.

\begin{theorem}\label{app:thm:nonlinear-jet-barrier}
Let $D\colon\R\to\R$ be a nonzero derivation, and let
$P=\{p_1,\ldots,p_n\}$ consist of at most six distinct points on a circle.
Let
\[
 \Phi(X_0,X_1,\ldots,X_k),\qquad X_j\in\R^3,
\]
be a polynomial, where $k\geq1$ is the largest index of a variable occurring
in $\Phi$.  If the degree of $\Phi$ in $X_k$ is at least three, then
there are no weights $\lambda_1,\ldots,\lambda_n$, not all zero, and no
constant $C$ such that
\begin{equation}\label{app:eq:nonlinear-identity}
 \sum_{i=1}^n\lambda_i
 \Phi(q_i,Dq_i,\ldots,D^kq_i)=C,
\end{equation}
for every labelled congruent copy $(q_i)$ of $(p_i)$ in $\R^3$.
\end{theorem}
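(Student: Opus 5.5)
The idea is to treat the highest jet $X_k$ as a free variable. I will use formal jets of rigid motions to add to the $k$-th jets of all points a common vector, together with a term depending linearly on each point. That should force the weights to annihilate every polynomial of degree at most three restricted to $P$, and for at most six circle points this forces all weights to vanish.

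First I would set up the jet calculus exactly as in the proof of Theorem~\ref{app:thm:all-jets}. Choosing $u$ with $D(u)\ne0$, the matrix $(D^a(u^b))$ is invertible. So actual jets of translations and of orthogonal maps (through the Cayley chart) are dense among formal finite jets. Since \eqref{app:eq:nonlinear-identity} is polynomial in the jets, it extends to formal jets. For a copy $q_i=t+Ap_i$ I will take $A$ constant up to order $k-1$ except at order zero. The translation jets $D^jt=\tau_j$ for $j<k$ and $D^kt=\sigma$ are arbitrary, and $D^k(A)=S$ is an arbitrary skew matrix. Then $D^jq_i=AD^jp_i+\tau_j$ for $j<k$, and $D^kq_i=AD^kp_i+\sigma+Sp_i$. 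I will place $P$ in a plane $E\subset\R^3$ with unit normal $e$, and use $S$ of the form $Sx=e(v\cdot x)-(e\cdot x)v$ with $v\in E$. Then $Sp_i=e\,f(p_i)$, where $f(x)=v\cdot x$ is an arbitrary linear form on $E$. Before all this I apply a generic rotation $A_0$, so that $e$ and the plane are in general position.

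Next comes the leading-term extraction, which I expect to be the main obstacle, because the coefficients of $\Phi$ in $X_k$ depend on the lower jets, and those differ from point to point. I would use a weighted scaling: replace $\tau_j$ by $s\tau_j$ and $(\sigma,f)$ by $s^M(\sigma,f)$ with $M$ larger than the total degree of $\Phi$. The top power of $s$ then picks out $\Psi(\tau;Y)$. This is the part of $\Phi$ of maximal degree $d_k\ge3$ in $X_k$, and within it the part of maximal degree in $(X_0,\dots,X_{k-1})$, with the lower variables replaced by the $\tau$'s. It is a nonzero polynomial, homogeneous of degree $d_k$ in $Y$, and no longer depends on $i$. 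The top $s$-coefficient of the identity then reads
\[
 \sum_i\lambda_i\Psi\bigl(\tau;\sigma+e\,f(p_i)\bigr)=0
\]
for all $\tau$, $\sigma$ and linear $f$. The constant $C$ disappears because the leading power of $s$ is positive. The generic rotation $A_0$ lets me assume $\Psi(\tau;e)\ne0$ for generic $\tau$.

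Then I expand in powers of $f(p_i)$. The coefficient of $f(p_i)^j$ is $\frac1{j!}\partial_e^j\Psi(\tau;\sigma)$, which is homogeneous of degree $d_k-j$ in $\sigma$. At $\sigma=e$ it equals $\binom{d_k}{j}\Psi(\tau;e)\ne0$, so it is a nonzero polynomial for each $0\le j\le d_k$. Separating the homogeneous components in $\sigma$ gives $\sum_i\lambda_if(p_i)^j=0$ for $j=0,1,2,3$ (here $d_k\ge3$ is essential) and every linear form $f$. Powers of linear forms span all homogeneous forms of each degree, so the weights annihilate every polynomial of degree at most three on $E$.

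Finally, I restrict to the circle and write $z_i=x_i+iy_i$ about the centre. Polynomials of degree at most three in the coordinates give all Laurent monomials $z^{-3},\dots,z^3$. Evaluated at six or fewer distinct nonzero points these have full rank by the Vandermonde argument already used in Theorem~\ref{app:thm:all-jets}. Hence $\lambda_i=0$ for all $i$, contradicting the assumption that not all weights vanish.
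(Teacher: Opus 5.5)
Your proposal is correct and follows essentially the same route as the paper: formal jets with a single order-$k$ rotation $S$ and translation, a scaling of the lower translation jets that isolates the top homogeneous part of $\Phi$ in $X_k$ (your $\Psi$, the paper's $h$), then comparison of coefficients, which reduces to the vanishing of all weighted circle harmonics of order at most three and a Vandermonde argument. The only cosmetic difference is that the paper takes the common translation along the normal line $\R\nu$, so $h(\nu)$ factors out and the binomial expansion in $\sin(\varphi_i-\theta)$ is immediate, whereas you keep $\sigma$ general and separate homogeneous components after a Taylor expansion along $e$.
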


\begin{proof}
Put $m=\deg_{X_k}\Phi\geq3$, and let
$H(X_0,\ldots,X_{k-1};X_k)$ be the part of $\Phi$ homogeneous of degree $m$
in $X_k$.  Among the coefficients of $H$ as a polynomial in $X_k$, take the
highest total degree $d$ in the lower variables.  By scaling suitable common
formal translation jets of orders below $k$ and extracting the coefficient
of degree $d$, we obtain a nonzero homogeneous polynomial
$h\colon\R^3\to\R$ of degree $m$. Base-copy lower jets contribute only
lower powers of the scaling parameter. Thus $h$ is independent of the
chosen base copy, and the identity obtained below remains valid after any
rigid repositioning of the circle.

We use the same formal-jet realization as in the preceding proof.  In a
rational Cayley chart, prescribe an orthogonal jet which is the identity
through order $k-1$ and whose order-$k$ term is $sS$, where $S$ is an
arbitrary skew-symmetric matrix.  Independently prescribe the order-$k$
translation jet to contribute an arbitrary $u\in\R^3$.  Thus the $k$th jet
of the $i$th point changes by
\[
 X_{k,i}\longmapsto X_{k,i}+u+sSp_i,
\]
while all lower jets remain fixed.  Extracting first the coefficient chosen
above and then the homogeneous degree-$m$ part in $(u,s)$ from
\eqref{app:eq:nonlinear-identity} gives
\begin{equation}\label{app:eq:nonlinear-leading}
 \sum_{i=1}^n\lambda_i h(u+sSp_i)=0,
\end{equation}
for all $u,s,S$.

Choose a vector $\nu$ with $h(\nu)\ne0$, and place the circle, with centre
the origin and radius $\rho$, in the plane perpendicular to $\nu$.  Write
$p_i=\rho(\cos\varphi_i,\sin\varphi_i)$ in that plane.  Taking $u=v\nu$ and
letting $S$ be rotation about an axis in the circle plane at angle $\theta$
gives, up to a common sign,
\[
 Sp_i=\rho\sin(\varphi_i-\theta)\nu.
\]
By the homogeneity of $h$, equation~\eqref{app:eq:nonlinear-leading} therefore implies
\[
 \sum_i\lambda_i
   \bigl(v+s\rho\sin(\varphi_i-\theta)\bigr)^m=0.
\]
Comparing coefficients shows that the weights annihilate
$\sin^j(\varphi_i-\theta)$ for $0\leq j\leq3$ and every $\theta$.  Hence they
annihilate every circle harmonic of order at most three.  With
$z_i=e^{\mathrm i\varphi_i}$, in particular they annihilate the six
consecutive Laurent monomials
\[
 z^{-2},z^{-1},1,z,z^2,z^3.
\]
The corresponding evaluation matrix on any at most six distinct nonzero
$z_i$ has full rank: after multiplying rows by $z_i^2$, it is a Vandermonde
matrix.  Thus every $\lambda_i$ is zero, a contradiction.
\end{proof}

For example, the theorem rules out every potential
$q\mapsto f(\|Lq\|^2)$ in which $L$ is a nonzero linear combination of
$1,D,\ldots,D^k$ with $k\geq1$ and $f$ has degree at least two.  It still
leaves polynomials which are at most quadratic in their highest jet variable
but have larger total degree, genuinely mixed-derivation expressions, and
arbitrary nonpolynomial colourings.  In particular, it gives no answer for
cyclic quadrilaterals.

\section{A limitation of all colourings based only on derivatives}

The following observation gives a different obstruction for a natural
family of cyclic quadrilaterals. It applies to arbitrary functions of
derivative data, not just quadratic potentials.

\begin{theorem}\label{app:thm:algebraic_translations}
Let $P=B\cup\{p_*\}\subset\R^d$, where every coordinate of
every point in $B$ is algebraic. For every $k\ge1$, every $k$-colouring
of $\R^{d(k+1)}$ which is invariant under translations by vectors
with algebraic coordinates contains a monochromatic congruent copy
of $P$.
\end{theorem}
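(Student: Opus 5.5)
The plan is to exploit the fact that algebraic translations move \(B\) onto itself "colour-wise". Fix a \(k\)-colouring \(\chi\) of \(\R^n\), \(n=d(k+1)\), invariant under translations by algebraic vectors. Translating \(P\) by \(-b_0\) for some \(b_0\in B\) (an algebraic vector), we may assume \(0\in B\). Embed \(\R^d\) as the first coordinate block. For any point \(q\in\R^n\) and any orthogonal matrix \(A\) with algebraic entries, every point \(q+Ab\) with \(b\in B\) differs from \(q\) by an algebraic vector, so the whole set \(q+AB\) receives the single colour \(\chi(q)\). The task is therefore to find \(q\), an algebraic orthogonal \(A\), and an admissible position \(x\) for the extra point with \(\chi(x)=\chi(q)\).

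Admissible positions are described as follows. Let \(U\) be the linear span of \(B\), and let \(c\) be the orthogonal projection of \(p_*\) onto \(U\). Let \(\rho\) be the distance from \(p_*\) to \(U\), and let \(S\) be the sphere of radius \(\rho\) about the origin in \(U^\perp\subset\R^n\). Since \(\dim U^\perp\ge dk\ge k\), the sphere \(S\) has dimension at least \(k-1\). The positions of the extra point compatible with \(B\) held fixed are exactly \(c+S\). For the copy \(q+AB\), they are therefore \(q+A(c+S)\). All of these points have norm \(|p_*|\), because \(0\in B\).

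The combinatorial step is then pigeonhole. It suffices to exhibit \(k+1\) points \(q_0,\dots,q_k\) such that, for every \(i<j\), the difference \(q_j-q_i\) lies in \(A_{ij}(c+S)\) for some algebraic orthogonal \(A_{ij}\). Two of these points share a colour, say \(q_i\) and \(q_j\) with \(i<j\). Then \(q_i+A_{ij}B\) together with \(q_j\) is a monochromatic congruent copy of \(P\). Moreover, \(q_j-q_i\) may be perturbed by any algebraic vector \(a\), since \(\chi(q_j+a)=\chi(q_j)\). So we only need \(q_j-q_i\in A_{ij}(c+S)+\text{(algebraic vectors)}\).

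I would build the \(q_i\) as a regular \(k\)-simplex of side length \(|p_*|\), placed inside the last \(k\) coordinate blocks. For each edge vector \(v=q_j-q_i\) we then need an algebraic orthogonal \(A\) with \(A^{-1}v\in c+S\). The set of orthogonal \(A\) with this property is the preimage of the submanifold \(c+S\) of the sphere of radius \(|p_*|\) under \(A\mapsto A^{-1}v\). Since \(S\) has positive dimension, this preimage should contain algebraic points; to show this I would use density of Cayley-parametrized rational rotations, restricted to a suitable subgroup acting transitively. Alternatively, I would use the algebraic-translation slack to move \(v\) slightly, choosing the simplex vertices with algebraic perturbations.

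This exact-hitting step, rather than the colouring argument, is where I expect the main difficulty. Density alone gives only approximate membership in \(c+S\). So I would try first to choose the simplex adaptively: pick \(q_0=0\), and then choose \(q_1,q_2,\dots\) one at a time. Each new vertex \(q_j\) must lie in the intersection of the sets \(q_i+\bigcup_A A(c+S)\) for the earlier \(i\). The dimension count \(n=d(k+1)\) is what should guarantee that these intersections remain nonempty for all \(k+1\) vertices.
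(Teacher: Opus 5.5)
\textbf{There is a genuine gap.} Your reduction is correct as far as it goes. By pigeonhole on $k+1$ points, and because $q+AB$ inherits the colour of $q$ whenever $A$ has algebraic entries, it suffices to find $q_1,\dots,q_{k+1}$ whose pairwise differences are images of an admissible position of the extra point under algebraic isometric embeddings. But the proposal stops at the step that carries the content, and as you set it up that step cannot in general be completed.

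\emph{Why generic exact hitting fails.} Suppose $B$ has a nonzero point $b$ (after your normalization $0\in B$). Then the requirement $A^{-1}v\in c+S$ includes $\langle v,Ab\rangle=\langle p_*,b\rangle$, where $Ab$ is a nonzero algebraic vector. This is a nontrivial linear relation with algebraic coefficients among the coordinates of $v$ and the number $\langle p_*,b\rangle$. Allowing an algebraic perturbation of $v$ only adds an algebraic constant to this relation. For a generically placed regular simplex no such relation holds. Density of Cayley-parametrized rotations cannot produce an exact one.

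\emph{Other failing premises.} Your claim that $S$ has positive dimension is false in general. If $p_*$ lies in the span of $B$ (for instance $d=1$, $B=\{0,1\}$, $p_*=\pi$), then $\rho=0$ and $c+S=\{p_*\}$, so there is no continuous family to exploit. The adaptive version rests on an unproved dimension count. Whether countably many translated spheres share a point is an arithmetic coincidence, not a question of dimension.

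\textbf{The missing idea} is to build the simplex out of $p_*$ itself. Then no hitting problem arises, and the extra point keeps its original position. Identify $\R^{d(k+1)}$ with $(\R^d)^{k+1}$ and set $q_i=e_i\otimes p_*/\sqrt2$ for $1\le i\le k+1$. Then $q_j-q_i=A_{ij}p_*$, where $A_{ij}z=\tfrac{1}{\sqrt2}(e_j-e_i)\otimes z$ is an isometric embedding $\R^d\to\R^{d(k+1)}$ with entries in $\{0,\pm1/\sqrt2\}$.

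Suppose $q_i$ and $q_j$ share a colour. Each $q_i+A_{ij}b$ with $b\in B$ differs from $q_i$ by the algebraic vector $A_{ij}b$, and $q_i+A_{ij}p_*=q_j$. So $q_i+A_{ij}P$ is monochromatic. This needs no normalization $0\in B$, no description of admissible positions, and no density argument. The ambient dimension $d(k+1)$ is there only to host $k+1$ orthogonal copies of $\R^d$.
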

\begin{proof}
Set $q_i=e_i\otimes p_*/\sqrt2$ for $1\le i\le k+1$, identifying
$\R^{d(k+1)}$ with $(\R^d)^{k+1}$.
Two points $q_i,q_j$ have the same colour. The map
\[
 Az=\frac{e_j-e_i}{\sqrt2}\otimes z
\]
is an isometric embedding with algebraic entries. In the copy $q_i+AP$,
each point $q_i+Ab$, $b\in B$, has the colour of $q_i$ by the assumed
translation invariance, and $q_i+Ap_*=q_j$. Hence the copy is
monochromatic.
\end{proof}

Every derivation annihilates the real algebraic numbers. Consequently
a colouring which depends only on the values of any collection of derivations, their positive iterates, or their nonempty compositions is
invariant under algebraic translations. None of these colourings can
exclude, for example, the cyclic quadrilateral
\[
 \left\{(-1,0),(0,1),(1,0),
 \left(\frac{1-t^2}{1+t^2},\frac{2t}{1+t^2}\right)\right\},
 \qquad t\text{ transcendental}.
\]
For a colouring that is an even function of its derivative data,
already the translate $P-p_*/2$ is monochromatic. Its data at every
algebraic vertex are minus one half of the data at $p_*$, while
the exceptional vertex gives plus one half. This includes arbitrary
colourings pulled back through quadratic potentials involving only
positive-order derivative data.
This limitation does not decide whether such quadrilaterals are Ramsey.

\end{document}